\theoremstyle{definition}
\newtheorem{definition}{Definition}
\theoremstyle{plain}
\newtheorem{theorem}{Theorem}
\theoremstyle{plain}
\newtheorem{lemma}{Lemma}
\theoremstyle{plain}
\newtheorem{remark}{Remark}
\theoremstyle{plain}
\theoremstyle{plain}
\newtheorem{assumptions}{Assumptions}
\colorlet{tableheadcolor}{gray!25} 
\newcommand{\headcol}{\rowcolor{tableheadcolor}} %
\colorlet{tablerowcolor}{gray!10} 
\newcommand{\rowcol}{\rowcolor{tablerowcolor}} %
\newcommand{\topline}{\arrayrulecolor{black}\specialrule{0.1em}{\abovetopsep}{0pt}%
	\arrayrulecolor{tableheadcolor}\specialrule{\belowrulesep}{0pt}{0pt}%
	\arrayrulecolor{black}}
\newcommand{\midline}{\arrayrulecolor{tableheadcolor}\specialrule{\aboverulesep}{0pt}{0pt}%
	\arrayrulecolor{black}\specialrule{\lightrulewidth}{0pt}{0pt}%
	\arrayrulecolor{white}\specialrule{\belowrulesep}{0pt}{0pt}%
	\arrayrulecolor{black}}
\newcommand{\bottomlinec}{\arrayrulecolor{tablerowcolor}\specialrule{\aboverulesep}{0pt}{0pt}%
	\arrayrulecolor{black}\specialrule{\heavyrulewidth}{0pt}{\belowbottomsep}}%
\renewcommand*\env@matrix[1][*\c@MaxMatrixCols c]{%
	\hskip -\arraycolsep
	\let\@ifnextchar\new@ifnextchar
	\array{#1}}
\def\keywords{\xdef\@thefnmark{}}
\title{Proximal stabilized  Interior Point Methods for quadratic programming and \textit{low-frequency-updates} preconditioning techniques}
\author[1]{S. Cipolla \thanks{Email: \texttt{scipolla@ed.ac.uk}}}
\author[1]{J. Gondzio \thanks{Email: \texttt{j.gondzio@ed.ac.uk}}} 
\affil[1]{ \footnotesize{The University of Edinburgh, School of Mathematics}}
\begin{document}
	\maketitle
	
\begin{abstract}
In this work, in the context of Linear and Quadratic Programming, we interpret Primal Dual Regularized Interior Point Methods (PDR-IPMs) in the framework of the Proximal Point Method. The resulting Proximal Stabilized IPM (PS-IPM) is strongly supported by theoretical results concerning convergence and the rate of convergence, and can handle degenerate problems. Moreover, in the second part of this work, we analyse the interactions between the regularization parameters and the computational footprint of the linear algebra routines used to solve the Newton linear systems. In particular, when these systems are solved using an iterative Krylov method, we propose general purpose preconditioners which, exploiting the regularization and a new rearrangement of the Schur complement, remain attractive for a series of subsequent IPM iterations. Therefore they need to be recomputed only  in a fraction of the total IPM iterations. 
The resulting regularized second order methods, for which \textit{low-frequency-updates} of the preconditioners are allowed, pave the path for an alternative \textit{third way} in-between first and second order methods.   	
\end{abstract}

{ \footnotesize
	\noindent \keywords{\textbf{Keywords}: Interior point methods, Proximal point methods, Regularized primal-dual methods, Convex quadratic programming} \\
	\keywords{\textbf{MSC2010 Subject Classification:} 65K05, 90C51, 90C06}%
}

\section{Introduction}

In this work we consider the problem of solving the following primal-dual convex quadratic programs:
\begin{align} \label{eq:QP_problem}
\begin{aligned}
\min_{\mathbf{x} \in \mathbb{R}^d} \;& f(\mathbf{x}) :=\frac{1}{2}\mathbf{x}^TH\mathbf{x} +\mathbf{g}^T\mathbf{x}  \\ 
\hbox{s.t.} \; & A\mathbf{x}= \mathbf{b} \\
& \mathbf{x}_{\mathcal{C}}  \geq 0 , \; \mathbf{x}_{\mathcal{F}}  \hbox{ free } \\
\end{aligned}
\;\;\;\;\;\;\;\;\;\;
\begin{aligned}
	\max_{\mathbf{x} \in \mathbb{R}^d, \; \mathbf{y} \in \mathbb{R}^m, \; \mathbf{s}\in \mathbb{R}^{|\mathcal{C}|}} \;& \mathbf{b}^T\mathbf{y}-\frac{1}{2}\mathbf{x}^TH\mathbf{x} \\ 
	\hbox{s.t.} \; & 
	H\mathbf{x}+\mathbf{g}-A^T\mathbf{y}-\begin{bmatrix}
		0 \\
		\mathbf{s}
	\end{bmatrix}=0 \\
	& \mathbf{s}  \geq 0, \\
\end{aligned}
\end{align}
where $H \in \mathbb{R}^{d \times d},$ $H \succeq 0$, $A \in \mathbb{R}^{m \times d}$, $\mathcal{C}\subset \{ 1, \dots,d\}$ and $\mathcal{F}:=\{ 1, \dots,d\} \setminus \mathcal{C}$. $A$ is not required to have full rank but we assume that the condition $m \leq d$ holds. We assume, moreover, for simplicity of exposition and w.l.g., that $\mathcal{F}=\{1,\dots,\bar{d}\}$ and $\mathcal{C}=\{\bar{d}+1, \dots, d\}$ for some $\bar{d}<d$. 

For the past few decades, Interior Point Methods (IPMs) \cite{MR2881732, MR1422257} have gained wide appreciation due to their remarkable success in solving linear and convex quadratic programming problems \eqref{eq:QP_problem}. Computational cost of an IPM iteration is dominated by the solution of a
Karush-Kuhn-Tucker (KKT) linear system of the form

\begin{equation}\label{eq:general_KKT}
{\begin{bmatrix}
			H+D  &     -A^T 
				\\
			A & 0 
		\end{bmatrix}}\begin{bmatrix}
			\Delta \mathbf{x} \\
			\Delta \mathbf{y}  
		\end{bmatrix}=\begin{bmatrix}
			f_{\mathbf{x}} \\
			f_{\mathbf{y}}
		\end{bmatrix},
\end{equation}
where the diagonal matrix $D$ and the right hand side change at every iteration. The diagonal matrix $D$ represents, somehow, the core \textit{of the IPM methodology} and acts, essentially, as a continuous approximation of the indicator function for labelling active and non-active variables based on the magnitude of its diagonal elements: in the limit, these elements approach $0$ or $+\infty$. 

A closer look at the KKT system in \eqref{eq:general_KKT}, reveals how the astonishing polynomial worst-case complexity of IPMs \cite{MR2881732, MR1422257} is counterbalanced by an intrinsic difficulty for the optimal tuning of the linear algebra solvers required for their implementation. We briefly summarize two such issues: 

\begin{enumerate}[({I}1)]
	\item \label{item:I1} near rank deficiency of $A$, or near singularity of $H + D$, can give rise to inefficient or unstable solutions of the KKT linear systems.  This may occur when both direct or iterative methods are used for their solution. Moreover, it is important to mention at  this point, the related issue concerning the fact that in case of a rank deficiency of $A$, the theory of convergence for IPM is not clear.
	\item \label{item:I2} for large scale problems, the unwelcome feature displayed by the diagonal elements of $D$  represents a paradigmatic example of how Krylov methods may be easily made
	ineffective due to the fact that the conditioning of the involved linear systems deteriorates as the IPM iterations proceed. As a result, the robustness and efficiency of IPMs depend heavily on the use of preconditioners which should be recomputed/updated at every iteration due to the presence of the rapidly changing matrix $D$. 
\end{enumerate}  

\subsection{Motivations and background}
In the last $20$ years there has been an intense research activity regarding items (I\ref{item:I1}) and (I\ref{item:I2}) mentioned in the previous section. In particular, in order to alleviate some of the numerical difficulties related to (I\ref{item:I1}), it has been proposed in  \cite{MR1777460} to systematically modify the linear system \eqref{eq:general_KKT} using, in essence, a diagonal regularization. Despite the fact that this strategy has proven to be effective in practice, to the best of our knowledge, in literature few works are devoted to the complete theoretical understanding of these regularization techniques. We mention \cite{MR2899152}, where the global convergence of an \textit{exact primal–dual regularized} IPM is shown under the somehow strong hypothesis that the computed Newton directions are uniformly bounded (see \cite[Th. 5.4]{MR2899152}) and \cite{MR4215213}, where regularization is interpreted in connection with the Augmented Lagrangian Method \cite{MR271809} and the Proximal Method of Multipliers \cite{MR418919} and where the convergence to the true solution is recovered when the regularization is driven to zero at a suitable speed.  

Concerning (I\ref{item:I2}), the literature is quite extensive and it is not possible to give a short comprehensive outlook.  We refer the interested reader to \cite[Sec. 5]{MR2881732} and \cite{MR2594606} for a comprehensive survey.  We prefer to stress, instead, the fact that the presence of the iteration dependent matrix $D$ and its diverging elements represents, somehow, the true  challenge in the efficient implementation of IPMs for large scale problems. As a matter of fact, the computational costs related to the \textit{necessary} re-computations of factorizations and/or preconditioners for the Newton linear systems represent the main bottleneck of the existing implementations.  

This work aims at using regularization as the tool to solve/alleviate simultaneously  the issues outlined in items (I\ref{item:I1}) and (I\ref{item:I2}). Indeed, broadly speaking, this work can be viewed as a study of the interactions between the regularization parameters used in the Primal Dual Regularized Interior Point Methods (PDR-IPMs) and the computational footprint of the linear algebra routines used to solve the related Newton linear systems.

\subsection{Contribution and organization}

The investigation which aims at addressing both issues (I\ref{item:I1}) and (I\ref{item:I2}) is naturally organized into two main threads. Indeed, in the first part of this work we aim at clarifying how alleviating the (near) rank deficiency of matrix $A$ using regularization affects the convergence of the underlying IPM scheme.
To this end, we build a bridge between Primal Dual Regularized IPMs (PDR-IPMs)  and the Proximal Point Method (PPM) \cite{MR410483} giving a precise description of the synergies occurring between them. 
In particular, our analysis contributes to the understanding of the hidden machinery which controls the convergence of the PDR-IPM and clarifies, finally, the influence of regularization for PDR-IPMs:  our proposal, the Proximal Stabilized IPM (PS-IPM), is strongly supported by theoretical results concerning convergence/rate of convergence and can handle degenerate problems as those described in (I\ref{item:I1}). 

In the second part of this work, building the momentum from the developed convergence theory, we address (I\ref{item:I2}) using a PS-IPM perspective. Here we prove that regularization can be used, in fact, as a tool to
pursue the challenging aspiration of reducing systematically the number of necessary preconditioner re-computations needed for the iterative solution of IPM Newton linear systems. Indeed, using an equivalent formulation of the LP/QP problem and a new rearrangement of the Schur complements for the related Newton systems, we are able to prove that when such systems are solved using an iterative Krylov method, the number of necessary preconditioner re-computations  equals just  a fraction of the total IPM iterations. As a straightforward consequence of the above findings, we are able to devise a class of IPM-type methods characterized by the fact that the re-computation of any given preconditioner can be performed in \textit{low-frequency}
regime, hence the linear algebra footprint of the method is significantly reduced.

The precise outline of the contribution and the organization of the work can be summarized as follows. 

In Sections \ref{sec:convex_formulation} and \ref{sec:PD_REG_IPM}, using the PPM \cite{MR410483} in its inexact variant \cite{MR732428}, we  prove the convergence of the PDR-IPM-type schemes for the solution of problem \eqref{eq:QP_problem} without any further assumptions on the uniform boundedness of the Newton directions or assuming that the regularization is driven to zero. Indeed, if on one hand our PS-IPM sheds further light
on the experimental evidence that regularization is of extreme importance  for robust and efficient IPMs implementations, on the other, it is supported by a precise result, see Theorem \ref{th:convergence_theorem}, tying the magnitude of the regularization parameters to its rate of convergence. The experimental evidence of the goodness of the proposed framework and of the resulting implementation is presented in Section~\ref{sec:num_res_p1} where we show that, when direct methods are used for the solution of the Newton system, fixing the regularization parameters to small values is preferable to a strategy in which a decreasing sequence of regularization parameters is employed.   

In Section \ref{sec:slac_prec}, we are able to depict a precise quantitative picture on the relations intervening between the regularization parameters and the necessity of recomputing any given preconditioner. Indeed, heavily relying on the form of the primal-dual regularized Newton systems and using a novel rearranging of their Schur complement which is based by a \textit{separation of variables trick}, we propose and analyse a new preconditioning technique for which the \textit{frequency of re-computation} depends inversely  on the magnitude of the regularization parameters. As a result, in the proposed PS-IPM scheme, the overall computational footprint of the linear algebra solvers can be decreased at the cost of slowing  down its rate of convergence.  
  
Finally, in Section \ref{sec:num_res_p2}, we carry out an experimental analysis of PS-IPMs when the corresponding Newton systems are solved using a Krylov iterative method precoditioned as proposed in Section \ref{sec:slac_prec}. The presented results show that our proposal can be tuned to obtain a number of preconditioner updates  roughly equal to one third of the total IPM iterations while maintaining an IPM-type rate of convergence, leading, hence, to significant improvements of the computational performance in large scale settings.

\section{Convex Formulation and the Proximal Point Algorithm} \label{sec:convex_formulation}

For problem \eqref{eq:QP_problem} we consider the following Lagrangian function:

\begin{equation} \label{eq:Lag_convex_for}
\mathcal{L}(\mathbf{x}, \mathbf{y})=\frac{1}{2}\mathbf{x}^TH\mathbf{x} +\mathbf{g}^T\mathbf{x}-\mathbf{y}^T(A
\mathbf{x} -\mathbf{b}) + I_D(\mathbf{x}, \mathbf{y}),
\end{equation}
where $I_D(\mathbf{x}, \mathbf{y})$ is the indicator function of the convex closed set
$$D:=\mathbb{R}^{\bar{d}} \times \mathbb{R}_{\geq 0}^{d-\bar{d}} \times \mathbb{R}^{m}.$$
\begin{lemma} \label{lem:boundary}
We have that	$\mathcal{B}(D)=\mathbb{R}^{\bar{d}} \times \mathcal{B}(\mathbb{R}_{\geq 0}^{d-\bar{d}}) \times \mathbb{R}^{m}$
where $\mathcal{B}(\cdot)$ is the boundary operator.
\end{lemma}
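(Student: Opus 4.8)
The statement is essentially that the topological boundary of a Cartesian product where one factor is a full Euclidean space contributes nothing new: only the factor $\mathbb{R}_{\geq 0}^{d-\bar d}$ has nonempty boundary, so $\mathcal{B}(D) = \mathbb{R}^{\bar d} \times \mathcal{B}(\mathbb{R}_{\geq 0}^{d-\bar d}) \times \mathbb{R}^m$. The plan is to invoke (or quickly re-derive) the general product rule for boundaries in a metric/product-topology setting, namely that for sets $U_i \subseteq X_i$ one has
\begin{equation*}
\mathcal{B}(U_1 \times \cdots \times U_k) \;=\; \bigcup_{i=1}^{k} \Big( \overline{U_1} \times \cdots \times \mathcal{B}(U_i) \times \cdots \times \overline{U_k} \Big),
\end{equation*}
combined with the observation $\mathcal{B}(\mathbb{R}^{\bar d}) = \emptyset$ and $\mathcal{B}(\mathbb{R}^m)=\emptyset$, together with $\overline{\mathbb{R}^{\bar d}}=\mathbb{R}^{\bar d}$, $\overline{\mathbb{R}^m}=\mathbb{R}^m$, and $\overline{\mathbb{R}_{\geq 0}^{d-\bar d}} = \mathbb{R}_{\geq 0}^{d-\bar d}$ (the nonnegative orthant is closed). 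Substituting these into the product formula, every term in the union except the one carrying $\mathcal{B}(\mathbb{R}_{\geq 0}^{d-\bar d})$ vanishes, and the surviving term is exactly $\mathbb{R}^{\bar d} \times \mathcal{B}(\mathbb{R}_{\geq 0}^{d-\bar d}) \times \mathbb{R}^m$.

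If one prefers a self-contained argument rather than citing the product rule, I would argue by double inclusion using $\mathcal{B}(S) = \overline{S} \setminus \mathrm{int}(S)$. For $\overline{D}$: since $D$ is closed (product of closed sets), $\overline{D}=D$. For $\mathrm{int}(D)$: a point $(\mathbf{x}_{\mathcal{F}}, \mathbf{x}_{\mathcal{C}}, \mathbf{y})$ is interior to $D$ iff there is a product of open balls around it contained in $D$; since $\mathbb{R}^{\bar d}$ and $\mathbb{R}^m$ are open in themselves, this holds iff $\mathbf{x}_{\mathcal{C}} \in \mathrm{int}(\mathbb{R}_{\geq 0}^{d-\bar d}) = \mathbb{R}_{>0}^{d-\bar d}$, i.e. $\mathrm{int}(D) = \mathbb{R}^{\bar d} \times \mathbb{R}_{>0}^{d-\bar d} \times \mathbb{R}^m$. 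Hence $\mathcal{B}(D) = D \setminus \mathrm{int}(D) = \mathbb{R}^{\bar d} \times \big( \mathbb{R}_{\geq 0}^{d-\bar d} \setminus \mathbb{R}_{>0}^{d-\bar d}\big) \times \mathbb{R}^m = \mathbb{R}^{\bar d} \times \mathcal{B}(\mathbb{R}_{\geq 0}^{d-\bar d}) \times \mathbb{R}^m$, using once more that $\mathbb{R}_{\geq 0}^{d-\bar d}$ is closed so its boundary is $\mathbb{R}_{\geq 0}^{d-\bar d}\setminus \mathbb{R}_{>0}^{d-\bar d}$.

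There is no real obstacle here; the only point requiring a line of care is the characterization of $\mathrm{int}(D)$ as a product of interiors, which is where the assumption that two of the three factors are the whole space (hence open) does the work — in a general product of sets the interior of the product can be strictly smaller than the product of interiors only when infinitely many factors are involved, so for a finite product this step is clean. I would therefore present the self-contained version, as it is short and makes transparent why the free variables $\mathbf{x}_{\mathcal{F}}$ and the multipliers $\mathbf{y}$ never lie on $\mathcal{B}(D)$, which is the fact actually used later in the convergence analysis.
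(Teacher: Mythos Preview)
Your proposal is correct, and your first approach---invoking the product formula for boundaries and then noting $\mathcal{B}(\mathbb{R}^{\bar d})=\mathcal{B}(\mathbb{R}^{m})=\emptyset$ with $\mathbb{R}^{\bar d}$, $\mathbb{R}^{m}$ closed---is exactly the paper's proof. Your second, self-contained argument via $\mathcal{B}(D)=\overline{D}\setminus\mathrm{int}(D)$ is also fine and arguably more transparent, but the paper does not take that route.
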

\begin{proof}
	Let us denote by $Cl(\cdot)$ the closure operator and observe that $$\mathcal{B}(A \times B \times C)=(\mathcal{B}(A)\times {C}l({B}) \times {C}l({C}) ) \cup ({C}l({A}) \times \mathcal{B}({B}) \times {C}l({C}) ) \cup ({C}l(A)\times {C}l({B}) \times \mathcal{B}({C}) ). $$
	Thesis follows observing that $\mathcal{B}(\mathbb{R}^{\bar{d}})=\mathcal{B}(\mathbb{R}^{{m}})=\emptyset$ and that $\mathbb{R}^{\bar{d}}$ and $\mathbb{R}^{\bar{m}}$ are closed.
\end{proof}

Let us now define  the saddle sub-differential operator related to \eqref{eq:Lag_convex_for} as

\begin{equation} \label{eq:saddle_sub_diff}
	T_\mathcal{L}(\mathbf{x},\mathbf{y}):=\begin{bmatrix}
	\partial_{\mathbf{x}}\mathcal{L}(\mathbf{x},\mathbf{y}) \\
	\partial_{\mathbf{y}}(-\mathcal{L}(\mathbf{x},\mathbf{y}))
	\end{bmatrix}= \begin{bmatrix}
	H\mathbf{x}+\mathbf{g}-A^T\mathbf{y} + \partial_\mathbf{x} I_D(\mathbf{x},\mathbf{y}) \\
	A\mathbf{x}-\mathbf{b}+\partial_\mathbf{y} I_D(\mathbf{x},\mathbf{y})
	\end{bmatrix},
\end{equation} 
where $\partial_{\mathbf{x}}$, $\partial_{\mathbf{y}}$ represent the partial sub-differential operators. The proper saddle function $\mathcal{L}(\mathbf{x},\mathbf{y})$ satisfies the hypothesis of \cite[Cor. 2]{rockafellar1970monotone} and hence the associated saddle operator, namely $T_{\mathcal{L}}$, is maximal monotone. In particular, the solutions $[\mathbf{x}^*,\mathbf{y}^*]$ of the problem $0 \in T_\mathcal{L}(\mathbf{x},\mathbf{y})$ are just the saddle points of $\mathcal{L}$, if any.

\begin{assumptions} We assume that the set $T_{\mathcal{L}}^{-1}(0) \neq \varnothing $ and $[\mathbf{x}^*,\mathbf{y}^*]^T \in \mathbb{R}^{d+m}$ represents a generic point in $T_{\mathcal{L}}^{-1}(0)$. Moreover, since $T_{\mathcal{L}}$ is maximal monotone, $T_{\mathcal{L}}^{-1}(0)$ is closed and convex. 
\end{assumptions}

Given a convex set $C \subset \mathbb{R}^{u}$, let us denote by $N_C(\mathbf{u})$ the normal cone to $C$ in $\mathbf{u} \in \mathbb{R}^u$ (see \cite[Sec. 2.1]{MR2515104}). In our case, we have that $$N_D(\mathbf{x},\mathbf{y})=\partial_\mathbf{x} I_D(\mathbf{x},\mathbf{y})\times \partial_\mathbf{y} I_D(\mathbf{x},\mathbf{y}) = N_{\mathbb{R}^{\bar{d}} \times \mathbb{R}_{\geq 0}^{d-\bar{d}}}(\mathbf{x})\times N_{\mathbb{R}^{m}}(\mathbf{y}). $$

The problem of finding $[\mathbf{x}^*,\mathbf{y}^*]$ s.t. $0 \in T_\mathcal{L}(\mathbf{x}^*,\mathbf{y}^*)$ can be alternatively written as the one of finding a solution for the problem
\begin{equation} \label{eq:var_formulation}
	- \begin{bmatrix}
	H & -A^T \\
	A & 0
	\end{bmatrix}\begin{bmatrix}
	\mathbf{x}\\
	\mathbf{y}
	\end{bmatrix} +\begin{bmatrix}
	-\mathbf{g} \\
	\mathbf{b}
	\end{bmatrix} \in N_D(\mathbf{x},\mathbf{y}),
\end{equation}
which represents the variational inequality formulation of problem \eqref{eq:saddle_sub_diff} (see \cite[Sec. 2.1]{MR2515104}). Moreover, using Lemma \ref{lem:boundary}, we have that

\begin{equation*}
	[\mathbf{v},\mathbf{w}] \in N_{D}(\mathbf{x},\mathbf{y}) \Leftrightarrow \begin{cases}
	 \mathbf{v}_i=0 \hbox{ for } i = 1,\dots, \bar{d} \\
	 \mathbf{v}_i=0  \hbox{ if } \mathbf{x}_i>0 \hbox{ and } i=\bar{d}+1, \dots, {d}\\
	 \mathbf{v}_i \leq 0  \hbox{ if } \mathbf{x}_i =0 \hbox{ and } i=\bar{d}+1, \dots, {d}\\
	 \mathbf{w}_i=0 \hbox{ for } i = 1,\dots, m \\
	\end{cases}.
\end{equation*}

At this stage, it is important to observe that given $[\mathbf{x}^*,\mathbf{y}^*]$ a solution of \eqref{eq:var_formulation}, we can recover a solution $[\mathbf{x}^*,\mathbf{y}^*,\mathbf{s}^* ]$  of \eqref{eq:QP_problem} defining $\mathbf{s}^*~:=-\mathbf{v}^*(\bar{d}+1:d)$ where $[\mathbf{v}^*,\mathbf{w}^*] \in N_{D}(\mathbf{x}^*,\mathbf{y}^*)$.

\subsection{Proximal Point Method}
In this section we follow essentially the developments from \cite{MR4047487,de2020primal} specializing our discussion for the operator $T_{\mathcal{L}}$. The Proximal Point Method (PPM) \cite{MR410483} finds zeros of maximal monotone operators by recursively applying their proximal operator. In particular, starting from an initial guess $[\mathbf{x}_0, \mathbf{y}_0]$, a sequence $[\mathbf{x}_k, \mathbf{y}_k]$ of primal-dual pairs is generated as follows:

\begin{equation} \label{eq:PPM_TL}
(\mathbf{x}_{k+1}, \mathbf{y}_{k+1})=\mathcal{P}(\mathbf{x}_{k}, \mathbf{y}_{k}),  \hbox{ where } \mathcal{P} = (I + \Sigma^{-1} T_{\mathcal{L}})^{-1} \hbox{ and } \Sigma:= blockdiag(\rho I_d , \delta I_m).
\end{equation}
Since $\Sigma^{-1} T_{\mathcal{L}}$ is yet maximal monotone, the operator $\mathcal{P}$ is single valued, non expansive and the generated sequence converges to a solution $[\mathbf{x}^*,\mathbf{y}^*] \in T_{\mathcal{L}}^{-1}(0)$ \cite{MR410483}.  

Evaluating the proximal operator $\mathcal{P}$ is equivalent of finding a solution to the problem

\begin{equation*}
	0 \in T_{\mathcal{L}}(\mathbf{x},\mathbf{y})+ \Sigma((\mathbf{x},\mathbf{y})-(\mathbf{x}_k,\mathbf{y}_k)),
\end{equation*}
which is guaranteed to have a unique solution. In particular, evaluating the proximal operator is equivalent to finding a solution of 

\begin{equation} \label{eq:regularized_saddle_formulation}
0 \in 	\begin{bmatrix}
	H\mathbf{x}+\mathbf{g}-A^T\mathbf{y} + \partial_\mathbf{x} I_D(\mathbf{x},\mathbf{y}) +\rho (\mathbf{x}-\mathbf{x}_k) \\
	A\mathbf{x}-\mathbf{b}+\partial_\mathbf{y} I_D(\mathbf{x},\mathbf{y}) + \delta (\mathbf{y}-\mathbf{y}_k)
	\end{bmatrix}
\end{equation}
which, in turn, corresponds to solving the primal dual regularized problem in \eqref{eq:QP_R_problem}:

\begin{align} \tag{RP} \label{eq:QP_R_problem}
\begin{aligned}
\min_{\mathbf{x} \in \mathbb{R}^d} \;& \frac{1}{2}\mathbf{x}^TH\mathbf{x} +\mathbf{g}^T\mathbf{x}+\frac{\rho}{2}\|\mathbf{x}-\mathbf{x}_k\|^2 +\frac{\delta}{2}\|\mathbf{y}\|^2  \\ 
\hbox{s.t.} \; & A\mathbf{x}+\delta(\mathbf{y}-\mathbf{y}_k)= \mathbf{b} \\
& \mathbf{x}_{\mathcal{C}}  \geq 0 , \; \mathbf{x}_{\mathcal{F}}  \hbox{ free }. \\
\end{aligned}
\end{align}
Moreover, also in this case, \eqref{eq:QP_R_problem} can be written in the following variational form:

\begin{equation} \label{eq:Reg_var_formulation}
- \begin{bmatrix}
H+\rho I & -A^T \\
A & \delta I
\end{bmatrix}\begin{bmatrix}
\mathbf{x}\\
\mathbf{y}
\end{bmatrix} +\begin{bmatrix}
-\mathbf{g} +\rho \mathbf{x}_k\\
\mathbf{b}+\delta \mathbf{y}_k
\end{bmatrix} \in N_D(\mathbf{x},\mathbf{y}).
\end{equation}

\subsection{Inexact PPM}
The inexact PPM has been originally analysed in \cite{MR410483} but we follow here the developments of \cite{MR732428}. We consider an approximate version of the PPM scheme in \eqref{eq:PPM_TL} where $(\mathbf{x}_{k+1}, \mathbf{y}_{k+1})$ satisfies
the criterion $(B)$ in \cite{MR732428}, i.e.,

\begin{equation} \label{eq:Inexat_crit}
	\|(\mathbf{x}_{k+1}, \mathbf{y}_{k+1})-\mathcal{P}(\mathbf{x}_{k}, \mathbf{y}_{k})\| \leq \varepsilon_k \min\{1, \|(\mathbf{x}_{k+1}, \mathbf{y}_{k+1})-(\mathbf{x}_{k}, \mathbf{y}_{k}) \|\}, \hbox{ where } \sum_{k=0}^{+\infty} \varepsilon_k < \infty.
\end{equation}

\begin{definition}
	In general, given $\mathbf{z} \in \mathbb{R}^{m+n}$ and a closed set $C$  we define
	\begin{equation*}
	dist(\mathbf{z},C):=\inf\{\|\mathbf{z}-{\mathbf{c}}\| \hbox{ for } {\mathbf{c}}\in {C} \}.
	\end{equation*}
\end{definition}
Theorem \ref{th:convergence_theorem} summarizes the results we are going to use in this work (the statements are specialized for our case):

\begin{theorem} \label{th:convergence_theorem}
	\begin{enumerate}
		\item Let us define the operator $S_k(\mathbf{x},\mathbf{y}):=T_{\mathcal{L}}(\mathbf{x},\mathbf{y})+\Sigma ((\mathbf{x},\mathbf{y})-(\mathbf{x}_k,\mathbf{y}_k))$ (see equation \eqref{eq:regularized_saddle_formulation}), then the condition in \eqref{eq:Inexat_crit} is implied by the condition 
		\begin{equation} \label{eq:Inexat_crit2}
			dist(0, S_k(\mathbf{x}_{k+1},\mathbf{y}_{k+1})) \leq {\min({\rho,\delta})}{\varepsilon_k}\min\{1, \|(\mathbf{x}_{k+1}, \mathbf{y}_{k+1})-(\mathbf{x}_{k}, \mathbf{y}_{k}) \|\},
		\end{equation}
		see \cite[Prop. 3]{MR410483}.
		\item The sequence $\{(\mathbf{x}_{k},\mathbf{y}_{k})\}_{k \in \mathbb{N}}$ generated by the recursion in \eqref{eq:PPM_TL} and using as inexactness criterion
		
		\begin{equation*} 
		\|(\mathbf{x}_{k+1}, \mathbf{y}_{k+1})-\mathcal{P}(\mathbf{x}_{k}, \mathbf{y}_{k})\| \leq \varepsilon_k, \hbox{ where } \sum_{k=0}^{+\infty} \varepsilon_k < \infty,
		\end{equation*}
		
		 is bounded if and only if there exists at least one solution of the problem $0 \in T_{\mathcal{L}}(\mathbf{x},\mathbf{y})$. Moreover it converges in the weak topology to a point $(\mathbf{x}^{*},\mathbf{y}^{*}) \in T_{\mathcal{L}}^{-1}(0)$ and
		
		\begin{equation*}
			0 = \lim_{k \to \infty} \|(I-\mathcal{P})(\mathbf{x}_{k},\mathbf{y}_{k})\|=\lim_{k \to \infty} \|(\mathbf{x}_{k+1},\mathbf{y}_{k+1})-(\mathbf{x}_{k},\mathbf{y}_{k})\|,
		\end{equation*}
		see \cite[Th. 1]{MR410483}.
		
		\item Suppose that
		
		\begin{equation} \label{eq:ZeroLipshtz}
			\exists \; a>0, \; \exists \; \delta>0 : \; \forall \mathbf{w} \in B(0,\delta), \; \forall \mathbf{z} \in T^{-1}_{\mathcal{L}}(\mathbf{w}) \hbox{ we have } dist(\mathbf{z} - T_{\mathcal{L}}^{-1}(0)) \leq a \|\mathbf{w}\|.
		\end{equation}
		Then, the sequence $\{(\mathbf{x}_{k},\mathbf{y}_{k})\}_{k \in \mathbb{N}}$ generated by the recursion in \eqref{eq:PPM_TL} using as inexactness criterion \eqref{eq:Inexat_crit}, is such that $dist((\mathbf{x}_{k},\mathbf{y}_{k}),T_{\mathcal{L}}^{-1}(0))\to 0$ linearly. Moreover, the  rate of convergence  is bounded by $a/(a^2+ (1/\max\{\rho,\delta\})^2)^{1/2}$, i.e.,
		
		\begin{equation} \label{eq:rate_of_convergence}
			\lim\sup_{k \to \infty} \frac{dist((\mathbf{x}_{k+1},\mathbf{y}_{k+1}),T_{\mathcal{L}}^{-1}(0))}{dist((\mathbf{x}_{k},\mathbf{y}_{k}),T_{\mathcal{L}}^{-1}(0))} \leq  \frac{a}{(a^2+(1/\max\{\rho,\delta\})^2)^{1/2}}<1,
		\end{equation}
		
		 see \cite[Th. 2.1]{MR732428}.
		
		\item The operators $T_{\mathcal{L}}$ and $T_{\mathcal{L}}^{-1}$ are polyhedral variational inequalities and hence they satisfy condition \eqref{eq:ZeroLipshtz}, see \cite[Sec. 3.4]{MR2515104}.
%
	\end{enumerate}
\end{theorem}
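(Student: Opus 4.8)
The plan is to derive Theorem~\ref{th:convergence_theorem} by specializing to the operator $T_{\mathcal{L}}$ and the fixed scaling $\Sigma=blockdiag(\rho I_d,\delta I_m)$ the classical (inexact) proximal point analysis of \cite{MR410483,MR732428} together with the upper-Lipschitz (Robinson) theorem for polyhedral multifunctions as exposed in \cite{MR2515104}. Since each of the four items is a citation, the only genuine work is to verify the structural hypotheses — strong monotonicity of the regularized operator, maximal monotonicity, polyhedrality — and to track how the eigenvalues of $\Sigma$ enter the constants $\min\{\rho,\delta\}$ and $\max\{\rho,\delta\}$.

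For item~1 I would first record that $\mathcal{P}(\mathbf{x}_k,\mathbf{y}_k)$ is by construction the \emph{unique} zero of $S_k$, and that $S_k=T_{\mathcal{L}}+\Sigma(\cdot-(\mathbf{x}_k,\mathbf{y}_k))$ is strongly monotone with modulus $\min\{\rho,\delta\}$, because $T_{\mathcal{L}}$ is monotone and $\Sigma\succeq\min\{\rho,\delta\}I$. Pairing the strong-monotonicity inequality between an arbitrary triple $(\mathbf{x},\mathbf{y},\boldsymbol{\xi})$ with $\boldsymbol{\xi}\in S_k(\mathbf{x},\mathbf{y})$ and the pair $(\mathcal{P}(\mathbf{x}_k,\mathbf{y}_k),0)$, and using Cauchy--Schwarz, gives $\|(\mathbf{x},\mathbf{y})-\mathcal{P}(\mathbf{x}_k,\mathbf{y}_k)\|\le\|\boldsymbol{\xi}\|/\min\{\rho,\delta\}$; taking the infimum over $\boldsymbol{\xi}\in S_k(\mathbf{x}_{k+1},\mathbf{y}_{k+1})$ and setting $(\mathbf{x},\mathbf{y})=(\mathbf{x}_{k+1},\mathbf{y}_{k+1})$ turns hypothesis \eqref{eq:Inexat_crit2} into \eqref{eq:Inexat_crit}. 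This is exactly the mechanism of \cite[Prop.~3]{MR410483}.

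For items~2 and~3 I would reduce the matrix-scaled recursion \eqref{eq:PPM_TL} to the scalar-parameter setting of \cite{MR410483,MR732428} via the symmetric rescaling $\tilde{\mathbf{z}}=\Sigma^{1/2}(\mathbf{x},\mathbf{y})$: then \eqref{eq:regularized_saddle_formulation} becomes $0\in\widetilde{T}(\tilde{\mathbf{z}})+(\tilde{\mathbf{z}}-\tilde{\mathbf{z}}_k)$ with $\widetilde{T}:=\Sigma^{-1/2}T_{\mathcal{L}}(\Sigma^{-1/2}\,\cdot\,)$, which is maximal monotone (a congruence of a maximal monotone operator, consistent with the text's observation that $\Sigma^{-1}T_{\mathcal{L}}$ is maximal monotone), and for which the iteration is the ordinary PPM with parameter $c_k\equiv1$. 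Rockafellar's boundedness/weak-convergence theorem \cite[Th.~1]{MR410483} applied to $\widetilde{T}$ then yields item~2, using the standing assumption $T_{\mathcal{L}}^{-1}(0)\neq\varnothing$ and undoing the change of variables (the vanishing of successive increments is equivalent for $\mathbf{z}$ and $\tilde{\mathbf{z}}$). For item~3, since item~1 shows \eqref{eq:Inexat_crit} is precisely Rockafellar's criterion~(B), I would invoke \cite[Th.~2.1]{MR732428} for $\widetilde{T}$; the calmness hypothesis \eqref{eq:ZeroLipshtz} on $T_{\mathcal{L}}^{-1}$ transfers to $\widetilde{T}^{-1}(\cdot)=\Sigma^{1/2}T_{\mathcal{L}}^{-1}(\Sigma^{1/2}\,\cdot\,)$ with modulus $a\,\lambda_{\max}(\Sigma)=a\max\{\rho,\delta\}$, so the generic rate $\tilde a/(\tilde a^2+1)^{1/2}$, on dividing numerator and denominator by $\max\{\rho,\delta\}$, becomes exactly $a/(a^2+(1/\max\{\rho,\delta\})^2)^{1/2}$ as in \eqref{eq:rate_of_convergence}. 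I expect this eigenvalue bookkeeping — reading scalar-parameter results through a genuinely non-scalar SPD proximal matrix, either via the $\Sigma$-weighted inner product or via the rescaling above — to be the only delicate point of the proof.

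For item~4 I would use the variational reformulation \eqref{eq:var_formulation}: $T_{\mathcal{L}}(\mathbf{x},\mathbf{y})=M(\mathbf{x},\mathbf{y})+\mathbf{q}+N_D(\mathbf{x},\mathbf{y})$ with $M$ linear and $D$ polyhedral, and the explicit piecewise description of $N_D$ given right after \eqref{eq:var_formulation} shows that the graph of $N_D$ is a finite union of polyhedra; hence the graph of $T_{\mathcal{L}}$, being its affine image, is again a finite union of polyhedra, i.e. $T_{\mathcal{L}}$ (and therefore $T_{\mathcal{L}}^{-1}$) is a polyhedral multifunction. Robinson's upper-Lipschitz theorem for polyhedral multifunctions (see \cite[Sec.~3.4]{MR2515104}) then supplies constants $a,\delta>0$ such that, for $\|\mathbf{w}\|\le\delta$, every $\mathbf{z}\in T_{\mathcal{L}}^{-1}(\mathbf{w})$ satisfies $dist(\mathbf{z},T_{\mathcal{L}}^{-1}(0))\le a\|\mathbf{w}\|$, which is exactly \eqref{eq:ZeroLipshtz}, and nothing beyond the cited statements is required.
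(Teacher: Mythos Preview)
The paper does not actually prove this theorem: each of the four items is stated with an embedded citation (\cite[Prop.~3]{MR410483}, \cite[Th.~1]{MR410483}, \cite[Th.~2.1]{MR732428}, \cite[Sec.~3.4]{MR2515104}) and no argument is given beyond those references. Your proposal therefore already goes further than the paper, since you sketch how the cited scalar-parameter results are to be read through the non-scalar proximal matrix $\Sigma$; the rescaling $\tilde{\mathbf z}=\Sigma^{1/2}(\mathbf x,\mathbf y)$ (equivalently, working in the $\Sigma$-inner product) is the standard device and your derivations for items~1, 2 and~4 are correct.

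One small point on item~3 that neither you nor the paper makes explicit: your computation $\tilde a=a\max\{\rho,\delta\}$ and the resulting rate $a/(a^2+(1/\max\{\rho,\delta\})^2)^{1/2}$ are obtained for the \emph{rescaled} iterates, i.e.\ for the $\Sigma$-weighted distance $dist_\Sigma((\mathbf x_k,\mathbf y_k),T_{\mathcal L}^{-1}(0))$. Transferring a $\limsup$ bound on a ratio of distances from one equivalent norm to another is not automatic (norm equivalence only controls each term up to $\sqrt{\max\{\rho,\delta\}/\min\{\rho,\delta\}}$, which can spoil the ratio). If you want \eqref{eq:rate_of_convergence} in the Euclidean norm with exactly the stated constant, you should either declare that the distance in \eqref{eq:rate_of_convergence} is the $\Sigma$-distance, or argue a bit more carefully (e.g.\ via R-linear convergence, which does transfer between equivalent norms). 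Since the paper itself simply cites \cite[Th.~2.1]{MR732428} and does not discuss this, your treatment is already more careful than what appears there.
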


\subsection{Inexact PPM: practical stopping criteria}

As stated in Item 1. of Theorem \ref{th:convergence_theorem}, in order to guarantee linear convergence, we need to impose algorithmically the condition in \eqref{eq:Inexat_crit2}.   In particular, using \eqref{eq:Reg_var_formulation} and the fact that, in general, it holds $$\mathbf{v} \in N_D(\mathbf{x}) \Leftrightarrow \Pi_D(\mathbf{x}+\mathbf{v})= \mathbf{x},$$
see \cite[Sec. 2.1]{MR2515104}, we can define the following \textit{natural residual} (used also in \cite{de2020primal,MR4047487}):

\begin{equation} \label{eq:PPM_res_reg}
	r_k(\mathbf{x},\mathbf{y}):=\begin{bmatrix}
		\mathbf{x} \\
		\mathbf{y}
	\end{bmatrix} - \Pi_{D}(\begin{bmatrix}
	\mathbf{x} \\
	\mathbf{y}
\end{bmatrix}- \begin{bmatrix}
		H\mathbf{x}+\mathbf{g}-A^T\mathbf{y} +\rho (\mathbf{x}-\mathbf{x}_k)  \\
		A\mathbf{x}-\mathbf{b}+ \delta (\mathbf{y}-\mathbf{y}_k) 
	\end{bmatrix}).
\end{equation} 

Using analogous reasoning as in the proof \cite[Prop. 2, Item 3.]{MR4047487} we state the existence of a constant $\tau_1>0$ s.t.

\begin{equation*}
	dist(0, S_k(\mathbf{x},\mathbf{y})) \leq \tau_1 \| r_k(\mathbf{x},\mathbf{y})\|.
\end{equation*} 
Analogously, defining 

\begin{equation} \label{eq:PPM_res}
	r(\mathbf{x},\mathbf{y}):=\begin{bmatrix}
		\mathbf{x} \\
		\mathbf{y}
	\end{bmatrix} - \Pi_{D}(\begin{bmatrix}
		\mathbf{x} \\
		\mathbf{y}
	\end{bmatrix}- \begin{bmatrix}
		H\mathbf{x}+\mathbf{g}-A^T\mathbf{y} \\
		A\mathbf{x}-\mathbf{b} 
	\end{bmatrix}),
\end{equation} 
we have $dist(0,T_{\mathcal{L}}^{-1}(\mathbf{x},\mathbf{y}))=O(\|r(\mathbf{x},\mathbf{y})\|)$. 

In Algorithm \ref{alg:Inexact_PPM} we present the particular form of the inexact PPM considered in this work.

\begin{algorithm}[hbt!]
	\caption{Inexact PPM for QP}\label{alg:Inexact_PPM}
	\KwIn{ $tol > 0$, $\sigma_r \in (0,1)$.  }
	\init{Iteration counter $k = 0$; initial points $\mathbf{x}_0,\;\mathbf{y}_0$}
	\While{$\|r(\mathbf{x},\mathbf{y})\|>tol$}{
		Find $(\mathbf{x}_{k+1},\mathbf{y}_{k+1})$ s.t. $\|r_k(\mathbf{x}_{k+1},\mathbf{y}_{k+1}))\| < \frac{\min({\rho,\delta})}{\tau_1}\sigma_r^k \min\{1, \|(\mathbf{x}_{k+1}, \mathbf{y}_{k+1})-(\mathbf{x}_{k}, \mathbf{y}_{k}) \|\} $ \\
		Update the iteration counter: $k := k + 1$.
	}
\end{algorithm}

\section{Primal-dual IPM for Proximal Point evaluations} \label{sec:PD_REG_IPM}

For problem \eqref{eq:QP_R_problem} let us introduce the Lagrangian

\begin{equation} \tag{RL} \label{eq:Lagrangian_R_Classic1}
\mathcal{L}_k(\mathbf{x}, \mathbf{y}, \mathbf{s})=\frac{1}{2}[\mathbf{x}^T, \mathbf{y}^T] \begin{bmatrix}
H+\rho I & 0 \\
0 & \delta I
\end{bmatrix}\begin{bmatrix}
\mathbf{x} \\
\mathbf{y}
\end{bmatrix} +[\mathbf{g}^T- \rho \mathbf{x}_k^T, 0 ]\begin{bmatrix}
\mathbf{x} \\
\mathbf{y}
\end{bmatrix}-\mathbf{y}^T(A
\mathbf{x} + \delta (\mathbf{y}-\mathbf{y}_k) -\mathbf{b}) - \mathbf{s}^T\mathbf{x}_{\mathcal{C}},
\end{equation}
where $\mathbf{s} \in \mathbb{R}^{|\mathcal{C}|}$ and $\mathbf{s}\geq 0$. Using \eqref{eq:Lagrangian_R_Classic1}, we write the KKT conditions
	\begin{align*} 
	\begin{bmatrix}
	H+\rho I & 0 \\
	0 & \delta I
	\end{bmatrix}\begin{bmatrix}
	\mathbf{x} \\
	\mathbf{y}
	\end{bmatrix}  +\begin{bmatrix}
	\mathbf{g} - \rho\mathbf{x}_k \\
	0
	\end{bmatrix}-\begin{bmatrix}
	A^T \mathbf{y} \\
	\delta \mathbf{y} +(A\mathbf{x}+\delta(\mathbf{y}-\mathbf{y}_k)-\mathbf{b})
	\end{bmatrix}  -\begin{bmatrix}
	0 \\
	\mathbf{s} \\
	0
	\end{bmatrix}=0; \\ 
	SX_{\mathcal{C}}\mathbf{e}=0; \\
	\mathbf{x}_{\mathcal{C}} \geq 0.
	\end{align*}

We can then  write  the dual form of problem \eqref{eq:QP_R_problem} as

\begin{align} \tag{RD} \label{eq:Dual_R_QP_problem}
\begin{aligned}
\max_{\mathbf{x} \in \mathbb{R}^d, \; \mathbf{y} \in \mathbb{R}^m, \; \mathbf{s}\in \mathbb{R}^{|\mathcal{C}|}} \;& \mathbf{y}^T\mathbf{b}  
- \frac{1}{2}\mathbf{x}^TH\mathbf{x} -\frac{\rho}{2}\|\mathbf{x}\|-\frac{\delta}{2}\|\mathbf{y}-\mathbf{y}_k\| \\ 
\hbox{s.t.} \; & 
(H+\rho I)\mathbf{x}+(\mathbf{g} - \rho\mathbf{x}_k)-A^T\mathbf{y} -\begin{bmatrix}
0 \\
\mathbf{s} 
\end{bmatrix}=0 \\
& \mathbf{s}  \geq 0, \\
\end{aligned}
\end{align}
where we used the fact that $(A\mathbf{x}+\delta\mathbf{y})=\mathbf{b}+\delta \mathbf{y}_k$. 

\begin{lemma} \cite[Lem 3.1]{MR2881732} If $(\mathbf{x},\mathbf{y},\begin{bmatrix}
	0 \\
	\mathbf{s} 
	\end{bmatrix})$ is primal dual feasible, then the duality gap is equal to the complementarity gap, i.e.,
	\begin{equation*}
		\frac{1}{2}\mathbf{x}^TH\mathbf{x} +\mathbf{g}^T\mathbf{x}+\frac{\rho}{2}\|\mathbf{x}-\mathbf{x}_k\|^2 +\frac{\delta}{2}\|\mathbf{y}\|^2- (\mathbf{y}^T\mathbf{b}  
		- \frac{1}{2}\mathbf{x}^TH\mathbf{x} -\frac{\rho}{2}\|\mathbf{x}\|-\frac{\delta}{2}\|\mathbf{y}-\mathbf{y}_k\|)= \mathbf{x}_{\mathcal{C}}^T\mathbf{s}.
	\end{equation*}
\end{lemma}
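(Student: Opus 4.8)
The plan is a one-idea, several-lines-of-algebra verification: write $P(\mathbf x,\mathbf y)$ for the objective of \eqref{eq:QP_R_problem} and $D(\mathbf x,\mathbf y,\mathbf s)$ for the objective of \eqref{eq:Dual_R_QP_problem}, so that the left-hand side of the asserted identity is exactly $P-D$, and then feed the feasibility relations in until everything collapses. From the first KKT block displayed above \eqref{eq:Dual_R_QP_problem}, ``primal--dual feasible'' unpacks into the $\mathbf y$-row equality $A\mathbf x+\delta(\mathbf y-\mathbf y_k)=\mathbf b$ and the $\mathbf x$-row equality $(H+\rho I)\mathbf x+\mathbf g-\rho\mathbf x_k-A^{T}\mathbf y-\begin{bmatrix}0\\ \mathbf s\end{bmatrix}=0$, together with $\mathbf x_{\mathcal C}\ge 0$, $\mathbf s\ge 0$; only the two equalities will be used (the sign conditions are what later make the gap nonnegative, but they are not needed for the identity itself).

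First I would take the inner product of the $\mathbf x$-row equality with $\mathbf x$. Using $\begin{bmatrix}0\\ \mathbf s\end{bmatrix}^{T}\mathbf x=\mathbf s^{T}\mathbf x_{\mathcal C}$, this expresses $\mathbf g^{T}\mathbf x$ through $\mathbf x^{T}(H+\rho I)\mathbf x$, the cross term $\rho\,\mathbf x_k^{T}\mathbf x$, the bilinear term $\mathbf y^{T}A\mathbf x$, and $\mathbf x_{\mathcal C}^{T}\mathbf s$. Next I would eliminate $\mathbf y^{T}A\mathbf x$ by means of the other equality, which gives $\mathbf y^{T}A\mathbf x=\mathbf y^{T}\mathbf b-\delta\|\mathbf y\|^{2}+\delta\,\mathbf y_k^{T}\mathbf y$. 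Plugging both substitutions into $P-D$ eliminates $\mathbf g^{T}\mathbf x$ altogether: the $-\mathbf x^{T}H\mathbf x$ it produces cancels the two half-copies $\tfrac12\mathbf x^{T}H\mathbf x$ coming from $P$ and from $-D$, and the $\mathbf y^{T}\mathbf b$ it produces cancels the $-\mathbf y^{T}\mathbf b$ already present. What survives is $\mathbf x_{\mathcal C}^{T}\mathbf s$ plus a purely quadratic remainder that decouples into an $(\mathbf x,\mathbf x_k)$-part and a $(\mathbf y,\mathbf y_k)$-part.

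The closing step is to tidy that remainder by completing the square: $\tfrac{\rho}{2}\|\mathbf x-\mathbf x_k\|^{2}-\tfrac{\rho}{2}\|\mathbf x\|^{2}+\rho\,\mathbf x_k^{T}\mathbf x=\tfrac{\rho}{2}\|\mathbf x_k\|^{2}$ and, symmetrically, the $\delta$-terms collapse to $\tfrac{\delta}{2}\|\mathbf y_k\|^{2}$. These depend only on the current proximal centre $(\mathbf x_k,\mathbf y_k)$ and are precisely the constants carried by the dual objective \eqref{eq:Dual_R_QP_problem}; once they are accounted for, the identity reads $P-D=\mathbf x_{\mathcal C}^{T}\mathbf s$, i.e. the duality gap equals the complementarity gap.

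I do not expect a genuine obstacle here: viewing \eqref{eq:QP_R_problem}/\eqref{eq:Dual_R_QP_problem} as an ordinary primal--dual convex QP pair in the single variable block $(\mathbf x,\mathbf y)$, this is just \cite[Lem.\ 3.1]{MR2881732}. The only thing that demands a little care is the bookkeeping of the proximal cross terms $\rho\,\mathbf x_k^{T}\mathbf x$ and $\delta\,\mathbf y_k^{T}\mathbf y$, which are absent in the unregularized statement; that is why I would keep the completing-the-square step explicit rather than merely quoting the classical lemma.
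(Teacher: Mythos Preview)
Your approach is correct and is the standard direct verification. The paper does not supply its own proof of this lemma but simply cites \cite[Lem.~3.1]{MR2881732}, so there is nothing substantive to compare; your care with the leftover proximal-centre constants $\tfrac{\rho}{2}\|\mathbf{x}_k\|^{2}+\tfrac{\delta}{2}\|\mathbf{y}_k\|^{2}$ is well placed, since the identity as literally displayed holds only once these are absorbed into the dual objective of \eqref{eq:Dual_R_QP_problem}.
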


In this work we consider an infeasible primal dual IPM for the solution of the problem \eqref{eq:QP_R_problem}, see Algorithm \ref{alg:IPM}. In particular, this is obtained considering the following Regularized Lagrangian Barrier function 

\begin{equation*} 
\begin{split}
\mathcal{L}_k(\mathbf{x}, \mathbf{y})=&\frac{1}{2}[\mathbf{x}^T, \mathbf{y}^T] \begin{bmatrix}
H+\rho I & 0 \\
0 & \delta I
\end{bmatrix}\begin{bmatrix}
\mathbf{x} \\
\mathbf{y}
\end{bmatrix} +[\mathbf{g}^T- \rho \mathbf{x}_k^T, 0 ]\begin{bmatrix}
\mathbf{x} \\
\mathbf{y}
\end{bmatrix} \\
&-\mathbf{y}^T(A
\mathbf{x} + \delta (\mathbf{y}-\mathbf{y}_k) -\mathbf{b}) - \mu \sum_{i \in \mathcal{C}} \ln (x_i)
\end{split}.
\end{equation*}
We write the corresponding KKT conditions

	\begin{align*}
	\nabla_{\mathbf{x}}\mathcal{L}_k(\mathbf{x},\mathbf{y})=	(H+ \rho I )\mathbf{x}-A^T \mathbf{y}+\mathbf{g} -\rho{\mathbf{x}_k} - \begin{bmatrix}
	0 \\
	\frac{\mu}{x_{\bar{d}+1}} \\
	\vdots \\
	\frac{\mu}{x_{d}}
	\end{bmatrix} =0 ; \label{eq:dual_R_feasibilityKKT1} \\
	-\nabla_{\mathbf{y}}\mathcal{L}_k(\mathbf{x}, \mathbf{y}) = (A\mathbf{x}+\delta (\mathbf{y}-\mathbf{y}_k) -\mathbf{b})=0 . 
	\end{align*}


Setting $s_i = \frac{\mu}{x_i}$ for $i \in \mathcal{C}$, we can then consider the following IPM map

\begin{equation}\label{eq:KKT_map}
	F_k^{\mu, \sigma}(\mathbf{x},\mathbf{y}, \mathbf{s}):=\begin{bmatrix}
	(H+ \rho I )\mathbf{x}-A^T \mathbf{y}+\mathbf{g} -\rho{\mathbf{x}_k} - \begin{bmatrix}
	0 \\
	\mathbf{s}
	\end{bmatrix} \\
A\mathbf{x}+\delta (\mathbf{y}-\mathbf{y}_k) -\mathbf{b}
	\\
	SX_{\mathcal{C}}\mathbf{e}-\sigma \mu \mathbf{e}
	\end{bmatrix}.
\end{equation}

A primal–dual interior-point method  applied to the problems \eqref{eq:QP_R_problem}-\eqref{eq:Dual_R_QP_problem}, is based on applying Newton iterations to solve a nonlinear problem of the form
\begin{equation*}
	F_k^{\mu, \sigma }(\mathbf{x},\mathbf{y}, \mathbf{s})=0, \; \; \mathbf{x}_{\mathcal{C}}>0, \; \mathbf{s}>0.
\end{equation*}
A Newton step for \eqref{eq:KKT_map} from the current iterate $(\mathbf{x},\mathbf{y}, \mathbf{s})$ is obtained by solving the system
\begin{equation*}
	\begin{bmatrix}
	 H+\rho I  &     -A^T  & \begin{bmatrix}
	 0
	 \\
	 -I
	 \end{bmatrix}
	 \\
  A & \delta I & 0 \\ 
   \begin{bmatrix}
   0 & S
   \end{bmatrix} & 0 & X_{\mathcal{C}}
	\end{bmatrix}\begin{bmatrix}
	\Delta \mathbf{x} \\
	\Delta \mathbf{y} \\
	\Delta \mathbf{s} 
	\end{bmatrix}=-F_k^{\mu,\sigma}(\mathbf{x},\mathbf{y},\mathbf{s})=:\begin{bmatrix}
	\xi_d \\
	\xi_p \\
	\xi_{\mu,\sigma}
	\end{bmatrix}.
\end{equation*}
Eliminating the variable $\Delta \mathbf{s}$ we obtain the linear system
\begin{equation}\label{eq:Newton_System}
	\underbrace{\begin{bmatrix}
	H+\rho I +\Theta^\dagger  &     -A^T 
	\\
    A & \delta I 
	\end{bmatrix}}_{\mathcal{N}_{\rho,\delta,\Theta}}\begin{bmatrix}
	\Delta \mathbf{x} \\
	\Delta \mathbf{y}  
	\end{bmatrix}=\begin{bmatrix}
	\xi_d^1 \\
	\xi^2_d+X^{-1}_{\mathcal{C}}\xi_{\mu,\sigma}\\
	\xi_p \\
	\end{bmatrix},
\end{equation}
where $\Theta^\dagger=diag([0,\dots,0];X^{-1}_{\mathcal{C}}S)$, $\xi_d^1:=[(\xi_d)_1,\dots, (\xi_d)_{\bar{d}} ]^T$ and $\xi_d^2:=[(\xi_d)_{\bar{d}+1},\dots, (\xi_d)_{{d}} ]^T$. 

In Algorithm \ref{alg:IPM} we report the IPM scheme for problem \eqref{eq:QP_R_problem}.  The method has a guaranteed polynomial convergence \cite[Chap. 6]{MR1422257} (cfr. also \cite{MR2899152,MR1242461,MR2500832,MR3082499}).
For notational simplicity we consider the case $\mathcal{C}=\{1,\dots,d\}$. To this aim, we also define
\begin{equation*}
	\begin{split}
	& \mathcal{N}_k(\bar {\gamma},\underline{\gamma},\gamma_p,\gamma_d):=\{(\mathbf{x},\mathbf{y},\textbf{s}) \;:\; \bar{\gamma} \mathbf{x}^T\mathbf{s}  \geq x_is_i \geq \underline{\gamma} \mathbf{x}^T\mathbf{s} \hbox{ for } i=1,\dots,d; \\
	& \gamma_p \mathbf{x}^T\mathbf{s} \geq \|A\mathbf{x}+\delta(\mathbf{y}-\mathbf{y}_k)-\mathbf{b}\|; \\
	& \gamma_d \mathbf{x}^T\mathbf{s} \geq \|Q\mathbf{x}+\rho(\mathbf{x}-\mathbf{x}_k)-A^T\mathbf{y}-\mathbf{s}\|\} \hbox{ and }\\
	& \begin{bmatrix}
	\mathbf{x}^j_k(\alpha)\\
	\mathbf{y}^j_{k}(\alpha) \\
	\mathbf{s}^j_{k}(\alpha) 
	\end{bmatrix}:=\begin{bmatrix}
	\mathbf{x}_{k}^j\\
	\mathbf{y}_{k}^j \\
	\mathbf{s}_{k}^j 
	\end{bmatrix}+\begin{bmatrix}
	\alpha \Delta \mathbf{x}_k^j\\
	\alpha \Delta \mathbf{y}_k^j \\
	\alpha \Delta \mathbf{s}_k^j 
	\end{bmatrix}
	\end{split}.
\end{equation*}

\begin{algorithm}[hbt!]
	\caption{Infeasible QP for problem \eqref{eq:QP_R_problem}}\label{alg:IPM}
	\KwIn{ 
		$\sigma, \bar{\sigma} \in (0,1)$ barrier reduction parameters s.t. $\sigma < \bar{\sigma}$; \\
		$\varepsilon_{p,k}>0,\varepsilon_{d,k}>0,\varepsilon_{c,k}>0$ optimality
		tolerances;}
	\init{\\
		Iteration counter $j = 0$; primal–dual point $\mathbf{x}_k^0 > 0,\;\mathbf{y}_k^0 > 0,\;\mathbf{s}_k^0 > 0$;\\
		Compute $\mu_k^0:={\mathbf{x}_k^0}^T\mathbf{s}_k^0/d$ and  $\xi^0_{d,k}$, and $\xi^0_{p,k}$.}
\While{Stopping Criterion False}{
		Solve the KKT system \eqref{eq:Newton_System} using  $[\xi^j_{p,k},\xi^j_{d,k},\xi^j_{\mu_k^j, \sigma}]^T$  to find $[\Delta \mathbf{x}_k^j,\; \Delta \mathbf{y}_k^j, \; \Delta \mathbf{s}^j_k ]^T$ \;
		Find $\alpha_k^j$ as the maximum for $\alpha \in [0,1]$ s.t. 
		$$(\mathbf{x}_k^j(\alpha),\mathbf{y}^j_k(\alpha),\mathbf{s}^j_k(\alpha) ) \in \mathcal{N}_k(\bar {\gamma},\underline{\gamma},\gamma_p,\gamma_d)$$   and 
		$$ \mathbf{x}^j_k(\alpha)^T\mathbf{s}_k^j(\alpha) \leq (1 -(1-\bar{\sigma})\alpha ){\mathbf{x}_k^j}^T\mathbf{s}_k^j \hbox{\; }$$
		
		Set $\begin{bmatrix}
		\mathbf{x}_{k}^{j+1}\\
		\mathbf{y}_{k}^{j+1} \\
		\mathbf{s}_{k}^{j+1} 
		\end{bmatrix}=\begin{bmatrix}
		\mathbf{x}_{k}^{j}\\
		\mathbf{y}_{k}^{j} \\
		\mathbf{s}_{k}^{j} 
		\end{bmatrix}+\begin{bmatrix}
		\alpha_k^j \Delta \mathbf{x}_k^j\\
		\alpha_k^j \Delta \mathbf{y}_k^j \\
		\alpha_k^j \Delta \mathbf{s}_k^j 
		\end{bmatrix}$ \;
		Compute the infeasibilities $\xi^{j+1}_{d,k}$, $\xi^{j+1}_{p,k}$ and barrier parameter $\mu^{j+1}_k:={\mathbf{x}^{j+1}_k}^T\mathbf{s}^{j+1}_k/d$ \;
		Update the iteration counter: $j := j + 1$.
	}
\end{algorithm}

\newpage

\subsection{The Proximal Stabilized-Interior Point Algorithm (PS-IPM)} 

In Algorithm \ref{alg:PS-MF-IPM} we present our proposal in full detail. 
\begin{algorithm}[hbt!]
	\caption{PS-IPM for QP}\label{alg:PS-MF-IPM}
	\KwIn{ $tol > 0$, $\sigma_r \in (0,1)$.  }
	\init{Iteration counter $k = 0$; initial point $(\mathbf{x}_0,\mathbf{y}_0)$}
	\While{$\|r(\mathbf{x}_k,\mathbf{y}_k)\|>tol$}{
		Use Algorithm \ref{alg:IPM} with starting point $(\mathbf{x}^0_{k}, \mathbf{y}^0_{k})=(\mathbf{x}_{k}, \mathbf{y}_{k})$ to find $(\mathbf{x}_{k+1},\mathbf{y}_{k+1})$ s.t. 
		\begin{equation}\label{eq:stopping_condition}
		\|r_k(\mathbf{x}_{k+1},\mathbf{y}_{k+1})\| < \frac{\min({\rho,\delta})}{\tau_1}\sigma_r^k \min\{1, \|(\mathbf{x}_{k+1}, \mathbf{y}_{k+1})-(\mathbf{x}_{k}, \mathbf{y}_{k}) \|
		\end{equation} \\
		
		Update the iteration counter: $k := k + 1$.
	}
\end{algorithm}

Two comments are in order at this stage.

\begin{enumerate}
	\item  It is important to observe that the \textit{warm starting strategy} of starting Algorithm \ref{alg:IPM} from the previous PPM approximation  $(\mathbf{x}_{k}, \mathbf{y}_{k})$ is justified by the fact that
	
	\begin{equation}\label{eq:warm_starting}
	\begin{split}
	& \|\mathcal{P}(\mathbf{x}_k) -\mathbf{x}_k \| \leq  \|\mathcal{P}(\mathbf{x}_k) -\mathcal{P}(\mathbf{x}_{k-1}) \|+ \|\mathcal{P}(\mathbf{x}_{k-1}) -\mathbf{x}_{k} \|  \\
	& \leq \eta \|\mathbf{x}_k -\mathbf{x}_{k-1} \|+ \|\mathcal{P}(\mathbf{x}_{k-1}) -\mathbf{x}_{k} \|,
	\end{split}
	\end{equation}
	where the second inequality follows from the fact that the proximal
	operator is Lipschitz continuous (see \cite[Theorem 4]{MR4047487}). Since the inexact PPM is converging we have that
	
	\begin{equation*}
	\|\mathcal{P}(\mathbf{x}_{k-1}) -\mathbf{x}_{k} \| \to 0 \hbox{ and } \|\mathbf{x}_k -\mathbf{x}_{k-1} \| \to 0,
	\end{equation*}
	proving that the proximal sub-problems  will need a non-increasing number of IPM iterations to be solved. We observe this behaviour in practice, typically after the first or second proximal iteration each subsequent proximal subproblem takes only one or two IPM iterations to converge (see Section \ref{sec:num_res_p1}) . 
	\item The IPM  Algorithm \ref{alg:IPM} uses \eqref{eq:stopping_condition} as a stopping condition.
	
\end{enumerate}

\section{Numerical Results: PS-IPM \& direct solvers} \label{sec:num_res_p1}

In this section, we present the computational results obtained by solving a set of small to
large scale linear and convex quadratic problems. We compare the performance of our proposal with that of IP-PMM \cite{MR4215213}, which, in turn, has been proven to outperform in robustness and efficiency the classic non-regularized IPM (see always \cite{MR4215213}). 
Our implementation closely follows  the one from \cite{MR4215213} and is written in Matlab\textsuperscript{\textregistered}    R2022a. 
For the solution of the (symmetrized) Newton linear systems \eqref{eq:Newton_System}, we use the  Matlab's \texttt{ldl} factorization. The factorization threshold parameter is set equal to the regularization parameter (see \eqref{eq:regularizion_par}) and is incremented by a factor $10$ if numerical instabilities are detected in the final solution of the given linear system. 

It is important to note that the presence of the regularization term stabilizes and accelerates the \texttt{ldl} routine for the Newton systems arising in Algorithm \ref{alg:IPM}, and, for this reason, we expect for our proposal similar stability and robustness properties when compared to IP-PMM. Nevertheless, from the numerical experiments presented, it will be clear that our proposal delivers a significant decrease  in the total number of IPM iterations resulting, overall, in a more efficient scheme.

Moreover, it is important to note that the reported computational times in this work are just indicative of the relative performance rather that the absolute ones. Indeed, each call of the Matlab's  \texttt{ldl} (wich uses \texttt{MA57} \cite{MR2075977}) requires an \textit{Analysis Phase} \cite[Sec. 6.2]{MR2075977}  which could be carried on just once since the sparsity pattern of the Newton matrices does not change during the IPM iterations.

Concerning the choice of the parameters in Algorithm \ref{alg:PS-MF-IPM}, we set $\sigma_r = 0.7$. Moreover, to prevent wasting time on finding excessively accurate solutions in the early PPM sub-problems, we substitute \eqref{eq:stopping_condition} with 
\begin{equation*}\label{eq:stopping_condition_empirical}
\|r_k(\mathbf{x}_{k+1},\mathbf{y}_{k+1}))\| < 10^4 \sigma_r^k \min\{1, \|(\mathbf{x}_{k+1}, \mathbf{y}_{k+1})-(\mathbf{x}_{k}, \mathbf{y}_{k}) \|.
\end{equation*}  
Indeed, in our computational experience, we found that driving the IPM solver to a high accuracy in the initial PPM iterations is unnecessary and, usually, leads to a significant deterioration of the overall performance. Concerning the initial guess, we use the same initial point as in \cite[Sec. 5.1.3]{MR4215213}, which, in turn, is based on the developments in \cite{MR1186163}. In our PS-IPM implementation, analogously to  \cite{MR4215213}, in order to find the search direction, we employ a widely used predictor-corrector method \cite{MR1186163}. This issue represents the main point where practical implementation deviates from the theory in order to gain computational efficiency. 
Concerning the stopping criterion, for the fairness of the comparison with IP-PMM, instead of using the natural residual \eqref{eq:PPM_res},
we stop the iterations of Algorithm \ref{alg:PS-MF-IPM}  when 

\begin{equation*} 
	\frac{\|\mathbf{g} -A^T\mathbf{y}+H \mathbf{x} -\mathbf{s}  \|}{\max\{\|\mathbf{g}\|,1\}} \leq \;tol \wedge \frac{\|\mathbf{b} -A\mathbf{x} \|}{\max\{\|\mathbf{b}\|,1\}} \leq \;tol \wedge \mu \leq \; tol.
\end{equation*}
Finally, we always set as regularization parameters  $\delta = \rho$, where

\begin{equation} \label{eq:regularizion_par}
	\rho = \max \{ \frac{1}{\max\{ \|A\|_{\infty},\|H\|_{\infty}\}}, 10^{-10} \}
\end{equation}
see \cite{MR4215213}. Our  large scale experiments are performed using a Dell PowerEdge R740 running Scientific Linux 7 with $4 \times$ Intel Gold 6234 3.3G, 8C/16T, 10.4GT/s, 24.75M Cache, Turbo, HT (130W) DDR4-2933.

Before showing the comparison results, we  start by briefly showcasing the theory developed until now. In particular, in Figures \ref{fig:exp1} and \ref{fig:exp2}, we report the details of the run of Algorithm \ref{alg:PS-MF-IPM} on the problems \texttt{25FV47} and \texttt{PILOT} from the Netlib collection. As the figures show, accordingly to \eqref{eq:rate_of_convergence} in Theorem \ref{th:convergence_theorem}, the rate of convergence of PPM decreases when the regularization parameter is increased (lower panels of Figures \ref{fig:exp1} and \ref{fig:exp2}). Moreover, accordingly to \eqref{eq:warm_starting}, the number of IPM iterations needed to solve the PPM sub-problems is non-increasing when the PPM iterations proceed (somehow our choice of the parameters amplifies this feature since in the majority of PPM iterations just one IPM sweep is enough to meet the inexactness criterion, see upper panels in Figures \ref{fig:exp1} and \ref{fig:exp2}).

\begin{figure}[ht!]
	\centering
	\includegraphics[width=0.48 \textwidth]{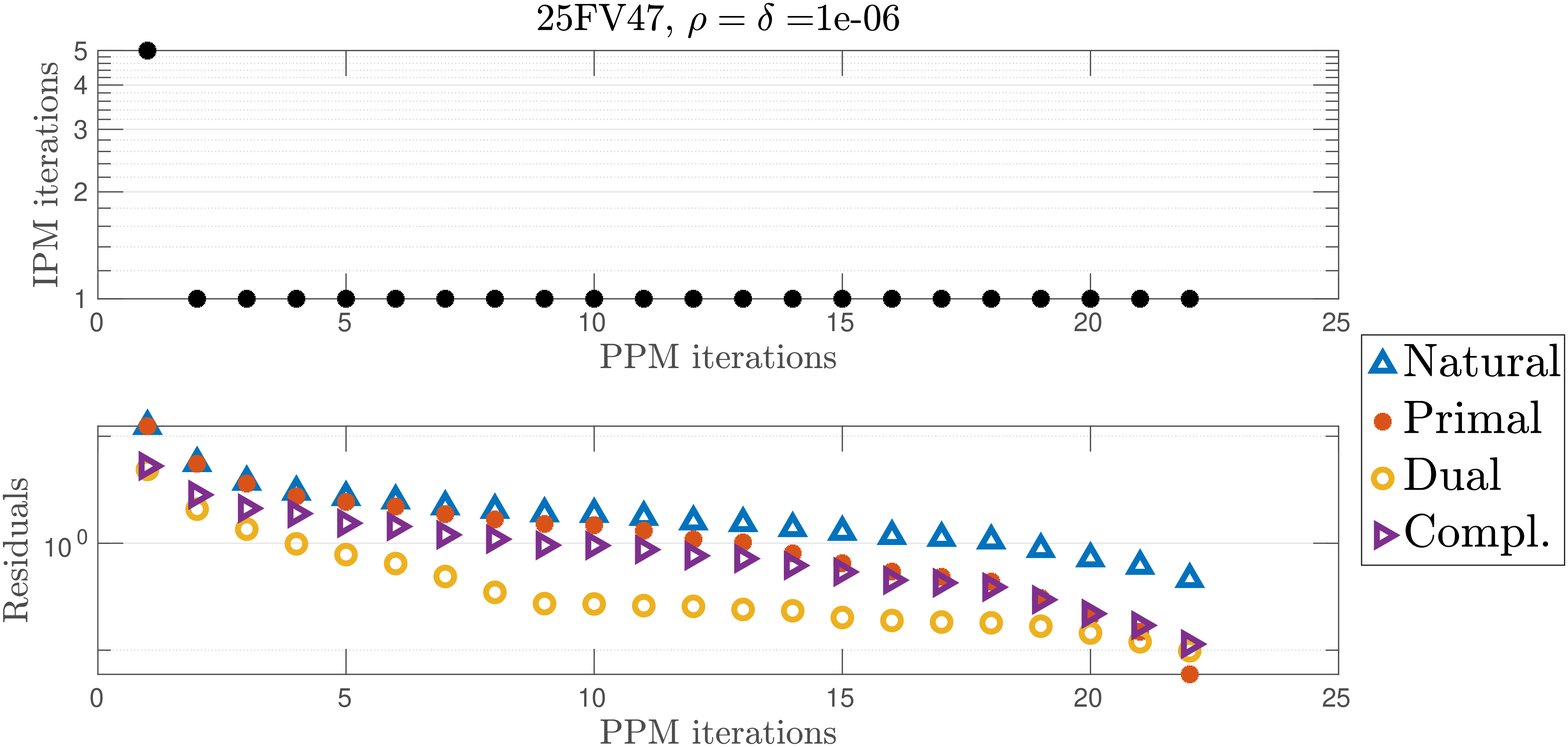} \;\includegraphics[width=0.48 \textwidth]{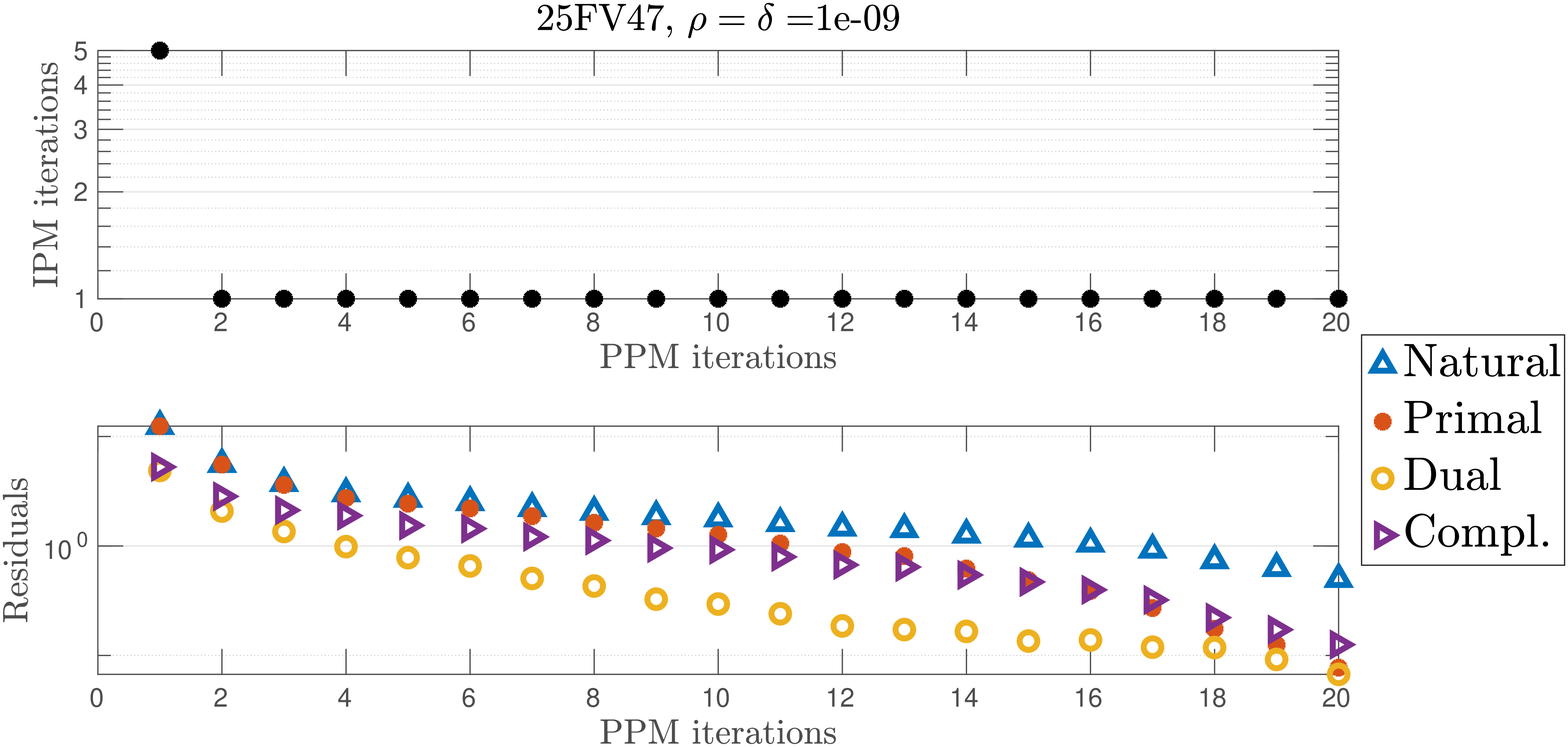}
	\caption{Problem \texttt{25FV47}. Upper Panels: PPM Iterations \& IPM Iterations. Lower Panels: Behaviour of residuals.} \label{fig:exp1}
\end{figure}

\begin{figure}[ht!]
	\centering
	\includegraphics[width=0.48 \textwidth]{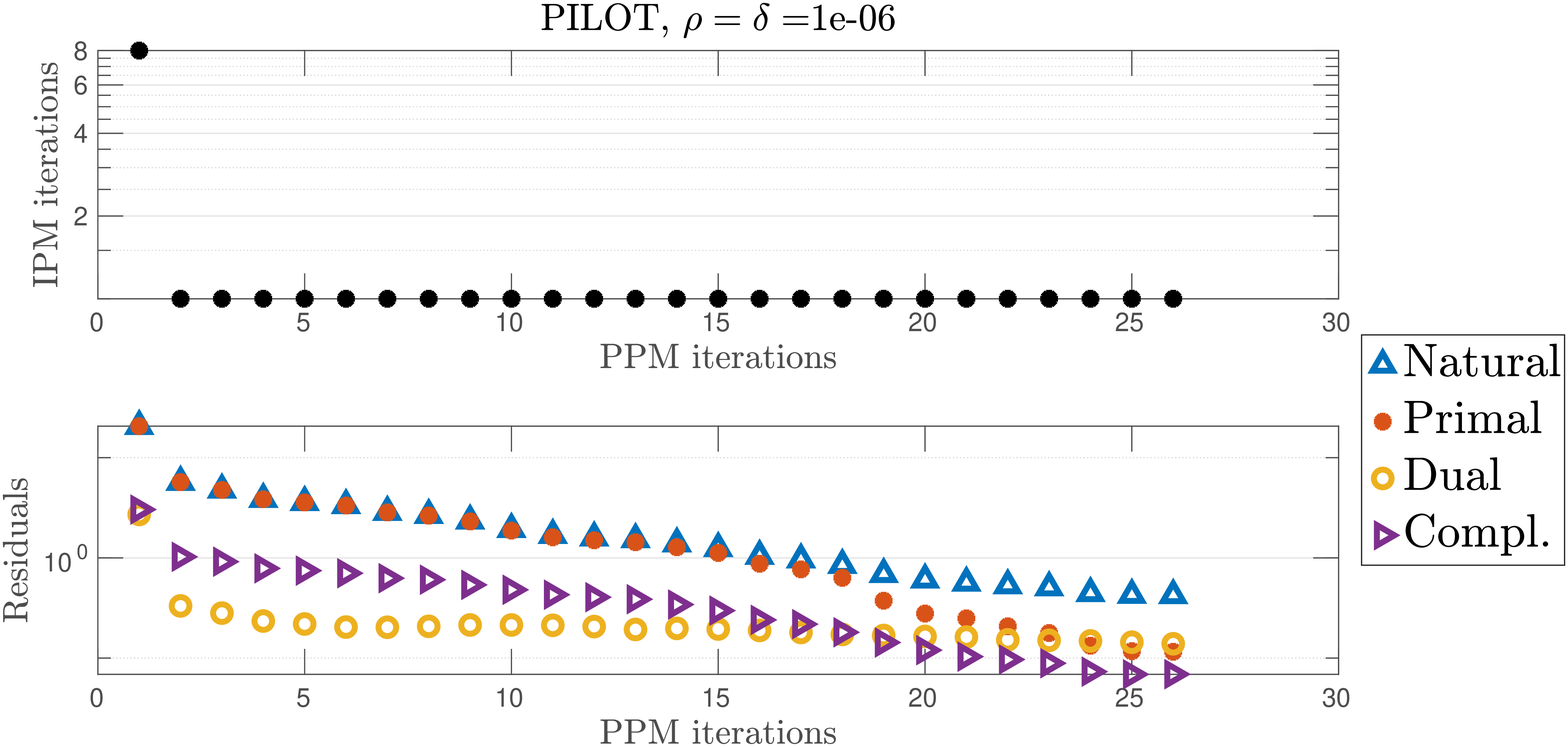} \; \includegraphics[width=0.48 \textwidth]{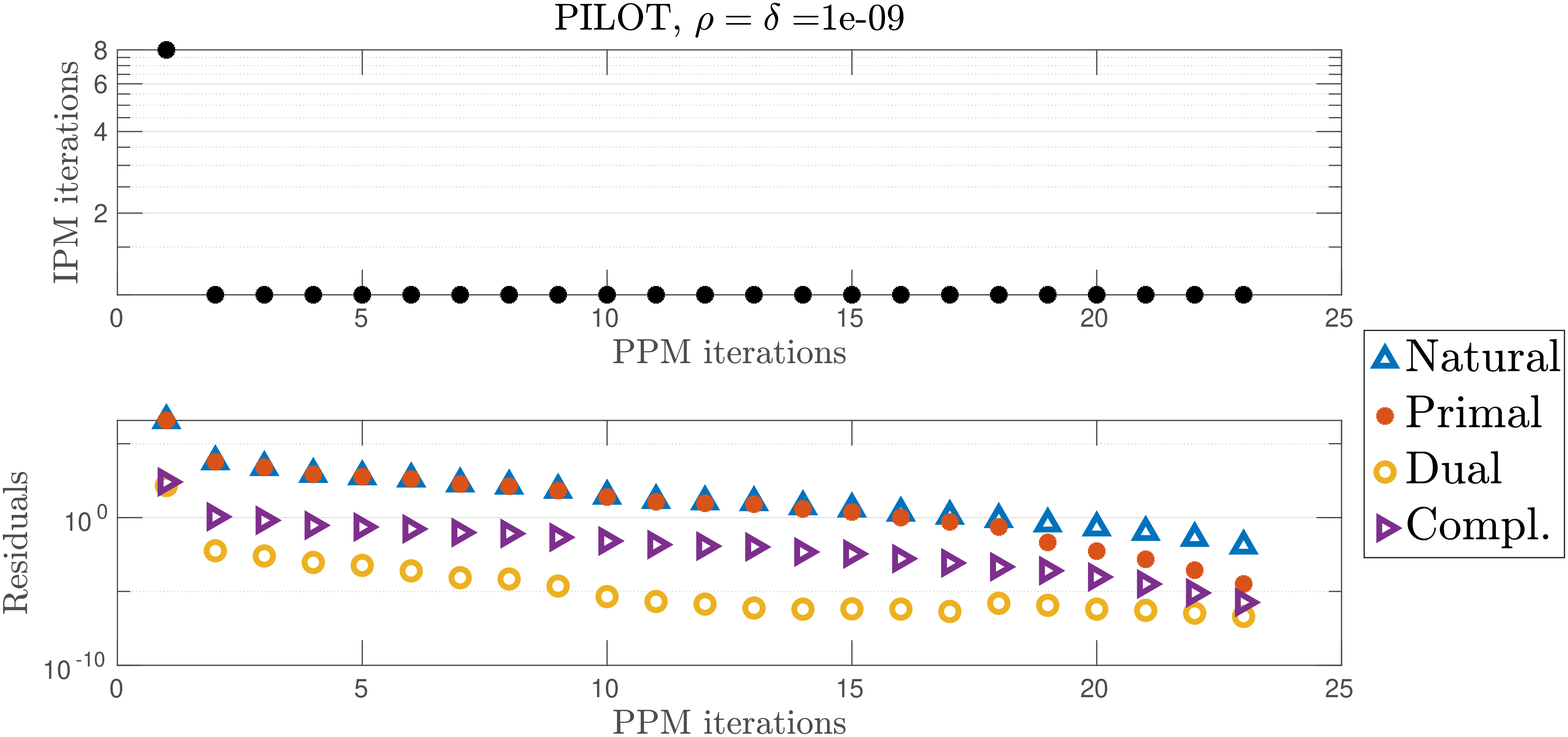}
	\caption{Problem \texttt{PILOT}. Upper panels: PPM Iterations \& IPM Iterations. Lower panels: Behaviour of residuals.} \label{fig:exp2}
\end{figure}

\subsection{Linear Programming}
The test set consists of 98 linear programming
problems from the Netlib collection. We compare the two
methods without using the pre-solved version of the problem collection (e.g. allowing rank-deficient matrices). Our proposal 
reaches the required accuracy on all the 98 problems. Hence, as expected, one of the benefits of the PPM framework becomes immediately obvious: the introduction of regularization alleviates the rank deficiency of the constraint matrix while guaranteeing convergence.
In particular, our proposal, requires a total of $1604$ PPM iterations and a total of $2518$ IPM iterations. In Figure \ref{fig:exp3} we report the performance profiles of our proposal when compared with IP-PMM \cite{MR4215213}. As revealed from the figure, the PPM framework proposed here outperforms consistently IP-PMM in terms of IPM iterations and this is reflected in a reduction of the execution time (left panel). All the obtained objective values from the two methods are comparable.

\begin{figure}[ht!]
	\centering
	\includegraphics[width=0.8 \textwidth]{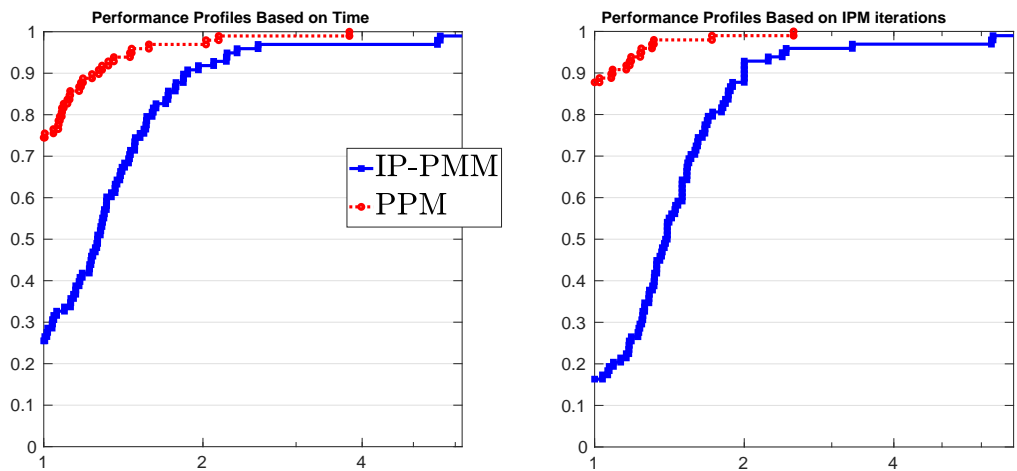} 
	\caption{Performance Profiles for Netlib's LP problems.} \label{fig:exp3}
\end{figure}

\subsection{Quadratic Programming}
Next, we present the comparison of the two methods over the Maros–M\'esz\'aros test set \cite{MR1778435}, which is comprised of 122 convex quadratic programming problems. Notice that we present the
comparison of the two methods over the set without applying any pre-processing. Our proposal reaches the required accuracy on all the 122 problems.  In particular, for our proposal the total PPM iterations were $1747$ and the total IPM iterations were $2656$. In Figure \ref{fig:exp4} we report the performance profiles for the comparison of the two methods. As it becomes apparent from the figure, the same observation which has been made for the LP case holds true also here, i.e., in the majority of cases our proposal consistently outperforms IP-PMM in terms of IPM iterations and in execution time.  All the obtained objective values from the two methods are comparable also in this case.   

\begin{figure}[ht!]
	\centering
	\includegraphics[width=0.8 \textwidth]{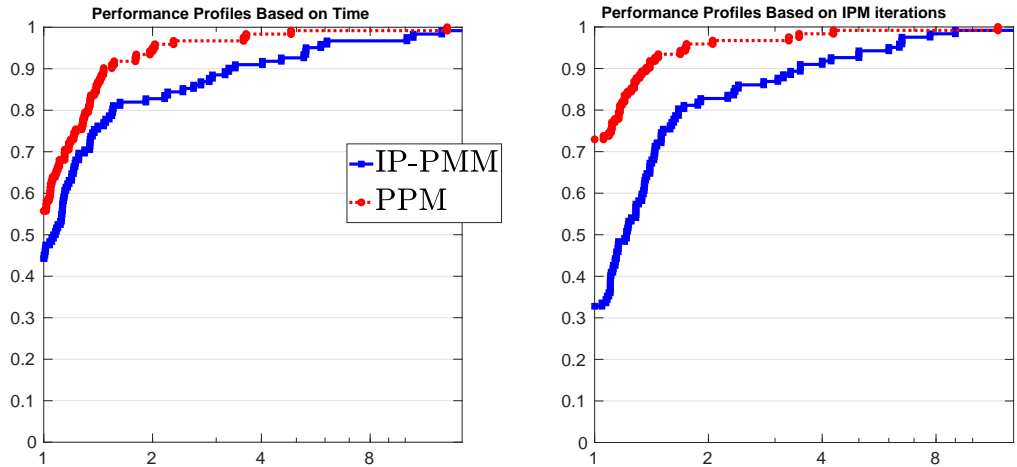} 
	\caption{Performance Profiles for Maros–M\'esz\'aros test set.} \label{fig:exp4}
\end{figure}

\subsection{Large scale problems}
All of our previous experiments were conducted on small to medium scale linear and convex quadratic programming problems. However, it is worth mentioning the limitations of the current approach, notably the memory and time required to handle the factorization. Since we employ factorizations during the iterations of the IPM, we expect that the method will be limited in terms of the size of the problems it can solve. To an extent this may be seen in Table \ref{table:1} in which we provide the statistics of the runs of the method over a set of large scale problems. It contains the number of non-zeros of the constraint matrices, as well as the time needed to solve the problem. Moreover, in Figure \ref{fig:exp5}, we report the performance profiles for the comparison of our proposal with IP-PMM: as the figures clearly highlight, our proposal outperforms IP-PMM in terms of IPM iterations and execution time further confirming the goodness of our approach. Also in this case all the obtained objective values from the two methods are comparable.

\begin{figure}[ht!]
	\centering
	\includegraphics[width=0.8 \textwidth]{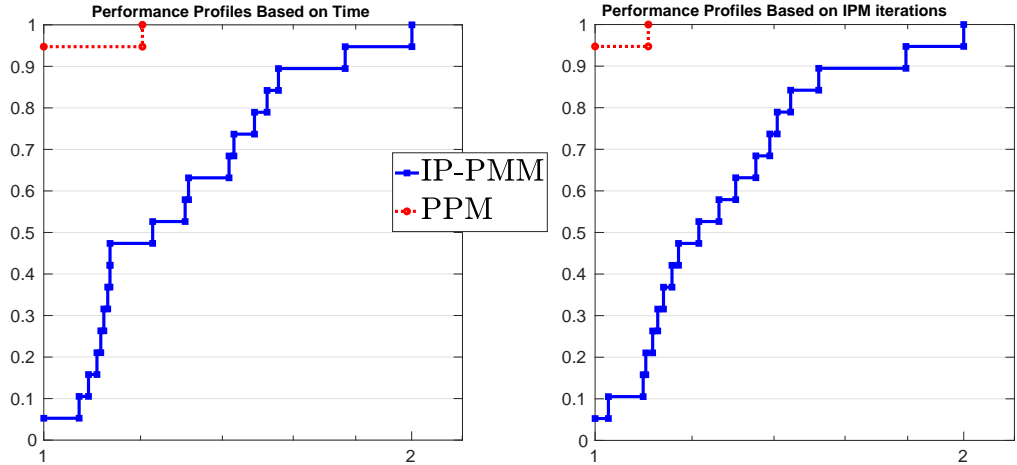} 
	\caption{Performance Profiles for Large Scale Problems.} \label{fig:exp5}
\end{figure}

Finally, in Figure \ref{fig:exp_ratio}, we report the ratio $\frac{IPM \; It.}{PPM \; It.}$. The figure further confirms the fact that thanks to the \textit{warm starting} in Algorithm \ref{alg:PS-MF-IPM} and the property \eqref{eq:warm_starting}, the average number of IPM sweeps per PPM iteration remains bounded from a worst case factor of four also for larger scale problems than those corresponding to Figures \ref{fig:exp1} and \ref{fig:exp2}.   

\newpage

\begin{table}[ht!] 
		\centering
	\scriptsize
	\caption{Details of PS-IPM performance for large scale problems} \label{table:1}
	\begin{tabular}{cccccccc}
		\topline
\headcol	{Problem} &{$nnz(A)$} &\multicolumn{1}{l}{PPM Iter} & {IPM Iter} & {Time(s)} & {Obj Val}&{Reg. Par.} & {Status} \\ \midline
		Mittelmann/fome21 & 604736 & 19 & 72 & 433.276914 & 47346318912.004189 & 5.425347e-09 & opt \\
		\rowcol
		LPnetlib/lp\_cre\_b & 260785 & 25 & 45 & 18.03 & 23129639.89 & 5.00e-09 & opt \\
		LPnetlib/lp\_cre\_d & 246614& 27 & 51 & 16.43 & 24454969.76 & 5.00e-09 & opt\\
		\rowcol
		LPnetlib/lp\_ken\_18 & 667569& 13 & 42 & 64.09 & -52217025287.38 & 5.00e-09 & opt\\
		Qaplib/lp\_nug20 &304800 & 17 & 17 & 258.97 & 2181.63 & 1.25e-07 & opt \\
		\rowcol
		LPnetlib/lp\_osa\_30 &604488 & 20 & 34 & 14.40 & 2142139.89 & 5.00e-09 & opt  \\
		LPnetlib/lp\_osa\_60 &1408073 & 18 & 33 & 42.82 & 4044072.58 & 5.00e-09 & opt\\
		\rowcol
		LPnetlib/lp\_pds\_10 & 139901& 20 & 44 & 24.00 & 26727094976.00 & 5.42e-09 & opt \\
		LPnetlib/lp\_pds\_20 & 302423& 19 & 61 & 209.19 & 23821658640.00 & 5.42e-09 & opt \\
		\rowcol
		LPnetlib/lp\_stocfor3 &72721 & 30 & 53 & 3.36 & -39976.78 & 5.00e-09 & opt\\ 
		Mittelmann/pds-100 &1515296 & 20 & 81 & 4362.05 & 10928229968.05 & 5.00e-09 & opt \\
		\rowcol
		Mittelmann/pds-30 & 447659& 23 & 70 & 467.42 & 21385445736.00 & 5.42e-09 & opt\\
		Mittelmann/pds-40 & 617606& 19 & 71 & 1066.96 & 18855198824.11 & 5.42e-09 & opt \\
		\rowcol
		Mittelmann/pds-50 &787867 & 20 & 72 & 1234.02 & 16603525724.00 & 5.42e-09 & opt \\
		Mittelmann/pds-60 &965265 & 19 & 74 & 1791.66 & 14265904407.18 & 5.42e-09 & opt \\
		\rowcol
		Mittelmann/pds-70 &1126605& 19 & 79 & 2704.41 & 12241162812.00 & 5.42e-09 & opt \\
		Mittelmann/rail2586 & 8011362 & 33 & 76 & 2371.13& 936.66 & 5.00e-09 & opt \\
		\rowcol
		Mittelmann/rail4284 & 11284032& 34 & 65 & 2720.71 & 1054.51 & 5.00e-09 & opt\\
		Mittelmann/rail582 & 402290 & 33 & 35 & 71.28 & 209.72 & 5.00e-09 & opt \\ 
		\bottomlinec                    
	\end{tabular}
\end{table}

\begin{figure}[htb!]
	\centering
	\includegraphics[width=0.5 \textwidth]{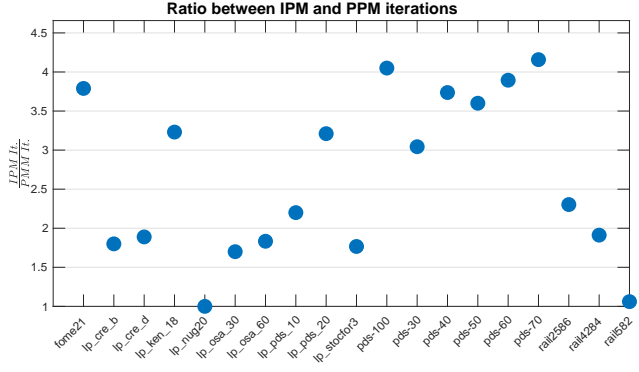}
	\caption{Average IPM sweeps per PPM iteration} \label{fig:exp_ratio}
\end{figure}

\section{Slack formulation and preconditioning} \label{sec:slac_prec}
The presence of proximal point regularization brings 
several advantages to the IPM. One of them is bounding 
the spectrum of the matrices in Newton system \cite{MR2891903, MR3010439,MR3682906}. 
In this section, we will show how this may be combined 
with a trick of replicating the variables involved 
in the inequality constraints to deliver a completely 
new and much desirable feature and alleviating 
the inherent numerical instability which originates 
from the IPM scaling matrix $\Theta$.   

For the sake of simplicity, in this section, we assume that in \eqref{eq:QP_problem} all variables have nonnegative constraints. 
Then, using the trick of variable replication we get the following slack formulation \cite[Sec. 6]{MR2899152} of the original problem \eqref{eq:QP_problem}:

\begin{align} \label{eq:QP_problem_slack}
\begin{aligned}
\min_{\mathbf{x} \in \mathbb{R}^{d_1}} \;& \frac{1}{2}\mathbf{x}^TH\mathbf{x} +\mathbf{g}^T\mathbf{x}  \\ 
\hbox{s.t.} \; & A\mathbf{x}= \mathbf{b}, \; \mathbf{x}- \mathbf{z} =0  \\
& \mathbf{z}  \geq 0 \\
\end{aligned}
\end{align}
%
In this case the IPM map \eqref{eq:KKT_map} can be written as

\begin{equation}\label{eq:KKT_map_slack}
F_k^{\mu, \sigma}(\mathbf{x},\mathbf{z},\mathbf{y}_1,\mathbf{y}_2, \mathbf{s}):=\begin{bmatrix}
\begin{bmatrix}
H + \rho I  & 0 \\
0           & \rho I
\end{bmatrix}\begin{bmatrix}
\mathbf{x} \\
\mathbf{z}
\end{bmatrix}+\begin{bmatrix}
\mathbf{g}-\rho \mathbf{x}_k \\
- \rho \mathbf{z}_k
\end{bmatrix} - \begin{bmatrix}
A^T & I \\
0 & -I
\end{bmatrix}\begin{bmatrix}
\mathbf{y}_1 \\
\mathbf{y}_2
\end{bmatrix}- \begin{bmatrix}
0 \\
\mathbf{s}
\end{bmatrix} \\

\begin{bmatrix}
A & 0 \\
I & -I
\end{bmatrix}\begin{bmatrix}
\mathbf{x} \\
\mathbf{z}
\end{bmatrix}+\delta \begin{bmatrix}
\mathbf{y}^1 -\mathbf{y}^1_{k}\\
\mathbf{y}^2 -\mathbf{y}^2_{k}
\end{bmatrix} - \begin{bmatrix}
\mathbf{b} \\
0
\end{bmatrix}
\\
SZ\mathbf{e}-\sigma \mu \mathbf{e}
\end{bmatrix}.
\end{equation}
Using \eqref{eq:KKT_map_slack}, the corresponding Newton system (see also equation \eqref{eq:Newton_System}), can be expressed as
\begin{equation}\label{eq:Newton_System_decoupled}
{\begin{bmatrix}
	H+\rho I  & 0                    &  -A^T    & -I \\
	0         & \Theta^{-1} +\rho I  &   0      & I \\
	A         &       0              &   \delta I & 0 \\ 
	I         &      -I              &     0      & \delta I \\ 
	\end{bmatrix}}\begin{bmatrix}
\Delta \mathbf{x} \\
\Delta \mathbf{z} \\
\Delta \mathbf{y}_1 \\
\Delta \mathbf{y}_2
\end{bmatrix}=\begin{bmatrix}
\xi_d^1 \\
\xi^2_d+Z^{-1}\xi_{\mu,\sigma}\\
\xi^1_p \\
\xi^2_p
\end{bmatrix}, \hbox{ where } \Theta = ZS^{-1}. 
\end{equation}

For the convenience of the reader we also report, in the following, the explicit expressions of the IPM residuals: the natural PPM residual in \eqref{eq:PPM_res} reads as

\begin{equation*} 
r(\mathbf{x},\mathbf{z}, \mathbf{y}):= \begin{bmatrix}
H\mathbf{x}+\mathbf{g}-[A^T\; \; I ]\mathbf{y} \\
\mathbf{z} - \Pi_{\mathbb{R}_{\geq 0}}(\mathbf{z}-(-[0\; \; -I]\mathbf{y})) \\
A\mathbf{x}-\mathbf{b} \\
\mathbf{x}-\mathbf{z} 
\end{bmatrix},
\end{equation*} 
whereas the residual in \eqref{eq:PPM_res_reg} becomes

\begin{equation*} 
r_k(\mathbf{x},\mathbf{z},\mathbf{y}):= \begin{bmatrix}
H\mathbf{x}+\mathbf{g}-[A^T \; \; I ]\mathbf{y} +\rho (\mathbf{x}-\mathbf{x}_k)  \\
\mathbf{z}- \Pi_{\mathbb{R}_{\geq 0}}(\mathbf{z} - (\rho(\mathbf{z}-\mathbf{z}_k)- [0\; \; -I]\mathbf{y}))\\
A\mathbf{x}-\mathbf{b}+ \delta (\mathbf{y}^1-\mathbf{y}^1_k) \\
\mathbf{x}-\mathbf{z} +\delta (\mathbf{y}^2-\mathbf{y}^2_k)
\end{bmatrix}.
\end{equation*}

\subsection{Solution of the Newton system} \label{sec:preconditioning}

In this section we will study in details the solution of the linear system \eqref{eq:Newton_System_decoupled} when reordered and symmetrized as follows:

\begin{equation}\label{eq:Newton_System_decoupled_permuted}
\underbrace{\begin{bmatrix}
	\Theta^{-1} +\rho I              & -I        &   0       & 0 \\
	-I                               & -\delta I &   I       & 0 \\
	0                                &  I        &   H+\rho I& A^T \\
	0                                &  0        &   A       & -\delta I\\   
	\end{bmatrix}}_{=:\mathcal{N}(\Theta)}\begin{bmatrix}
\Delta \mathbf{z} \\
-\Delta \mathbf{y}_2 \\
\Delta \mathbf{x} \\
-\Delta \mathbf{y}_1

\end{bmatrix}=\begin{bmatrix}
\xi^2_d+Z^{-1}\xi_{\mu,\sigma}\\
\xi^2_p \\
\xi_d^1 \\
\xi^1_p
\end{bmatrix}.
\end{equation}
To this aim, let us partition the matrix $\mathcal{N}(\Theta)$ as 

\begin{equation*}
	\mathcal{N}(\Theta)=\begin{bmatrix}
	N_{11}(\Theta) & N_{12} \\
	N_{21} & N_{22}
	\end{bmatrix},
\end{equation*}  
where \begin{equation*}
N_{11}(\Theta):=\begin{bmatrix}
\Theta^{-1} +\rho I              & -I    \\
-I                               & -\delta I
\end{bmatrix}, \; N_{12}:=\begin{bmatrix}
0 & 0 \\
I & 0
\end{bmatrix}, \;  N_{21}=N_{12}^T, \;  N_{22}:= \begin{bmatrix}
H+\rho I & A^T \\
A       & -\delta I
\end{bmatrix}.
\end{equation*}

\noindent Before continuing, let us observe that, under suitable hypothesis, the solution of a linear system of the form

\begin{equation*}
\begin{bmatrix}
G_{11}  & G_{12} \\
G_{21}       & G_{22}
\end{bmatrix}\begin{bmatrix}
\mathbf{x} \\
\mathbf{y} 
\end{bmatrix}=\begin{bmatrix}
\mathbf{b}_\mathbf{x} \\
\mathbf{b}_\mathbf{y} 
\end{bmatrix},
\end{equation*} 
can be obtained solving 
\begin{equation} \label{eq:block_solution}
\begin{cases}
(G_{22}-G_{21}G_{11}^{-1}G_{12})\mathbf{y}=\mathbf{b}_{\mathbf{y}}-G_{21}G_{11}^{-1}\mathbf{b}_{\mathbf{x}} \\
\mathbf{x}=G_{11}^{-1}(\mathbf{b}_{\mathbf{x}} - G_{12}\mathbf{y} ).

\end{cases}
\end{equation}

Since the linear systems involving $N_{11}(\Theta)$ are easily solvable, using \eqref{eq:block_solution}, the overall solution of the linear system \eqref{eq:Newton_System_decoupled_permuted} can be  obtained from the following two ancillary ones:

\begin{subequations}\label{eq:Newton_Solution}
	\begin{align}
	 S(\Theta)\begin{bmatrix}
	 \Delta \mathbf{x} \\
	 -\Delta \mathbf{y}_1
	 \end{bmatrix} =(\begin{bmatrix}
	 \xi_d^1 \\
	 \xi^1_p
	 \end{bmatrix}-N_{21}N_{11}(\Theta)^{-1}\begin{bmatrix}
	 \xi^2_d+Z^{-1}\xi_{\mu,\sigma}\\
	 \xi^2_p 
	 \end{bmatrix}
	 ) \label{eq:Newton_Solution_sys1} \\
	 N_{11}(\Theta)\begin{bmatrix}
	 \Delta \mathbf{z} \\
	 -\Delta \mathbf{y}_2
	 \end{bmatrix}=(\begin{bmatrix}
	 	\xi^2_d+Z^{-1}\xi_{\mu,\sigma}\\
	 	\xi^2_p 
	 \end{bmatrix}- N_{12}\begin{bmatrix}
	 \Delta \mathbf{x} \\
	 -\Delta \mathbf{y}_1
	 \end{bmatrix}), \label{eq:Newton_Solution_sys2}
	\end{align}
\end{subequations}
where $S(\Theta)$ is the Schur complement

\begin{equation} \label{eq:Scur_Separation}
S(\Theta):= \begin{bmatrix}
H+\rho I + (\delta I +(\Theta^{-1}+ \rho I)^{-1})^{-1}  & A^T \\
A       & -\delta I
\end{bmatrix}.
\end{equation}

It is important to observe that, at this stage, the reasons to go through  the current reformulation of the problem are not completely apparent: we essentially doubled the dimension of the primal variables ending up with the necessity of solving linear systems
involving a Schur complement, see equation  \eqref{eq:Scur_Separation}, which has exactly the same sparsity pattern as the standard (symmetrized) Newton system

\begin{equation} \label{eq:stadard_newton}
\mathcal{N}_{C}(\Theta):=\begin{bmatrix}
	H+\rho I +\Theta^{-1}  &     A^T 
	\\
	A & -\delta I 
	\end{bmatrix},
\end{equation}
cfr. equation \eqref{eq:Newton_System}.  

In the following Remarks \ref{rem:1} and \ref{rem:2} we highlight the advantages given by the current reformulation of the Newton system showing, in essence, that the formulation  in \eqref{eq:Scur_Separation} allows \textit{better preconditioner re-usage}  than in the standard formulation \eqref{eq:stadard_newton}. To this aim, as it is customary in IPM methods, let us suppose that

\begin{equation*}
 \lambda_{max}(\Theta^{-1})=O(\frac{1}{\mu})  \hbox{ and } \lambda_{min}(\Theta^{-1})=O({\mu}),
\end{equation*}
where $\mu$ is the average complementarity product at any given IPM iteration. Using the above assumption, we obtain
\begin{equation}\label{eq:cluster_limit}
\begin{split}
& \lim_{\mu \to 0} \lambda_{min}(\delta I + (\Theta^{-1}+\rho I )^{-1})^{-1} = \frac{\rho}{\delta \rho +1} \\
& \lim_{\mu \to 0} \lambda_{max}(\delta I + (\Theta^{-1}+\rho I )^{-1})^{-1} = \frac{1}{\delta}.
\end{split}
\end{equation} 

From equation \eqref{eq:cluster_limit} the main advantage of dealing with the formulation \eqref{eq:Scur_Separation} of the Schur complement
becomes more apparent: whilst the elements of the diagonal IPM matrix appearing in $\mathcal{N}_C(\Theta)$ are such that $\Theta^{-1}_{ii} \in (0,+\infty)$ when $\mu \to 0$,  the diagonal elements appearing in the Schur complement \eqref{eq:Scur_Separation} belong to the interval $(\frac{\rho}{\delta \rho +1}, \frac{1}{\delta})$ when $\mu \to 0$.

\begin{remark} \label{rem:1}
	When $\mu \to 0$, for the variables which have been identified as active or inactive by the PS-IPM, see Algorithm \ref{alg:PS-MF-IPM}, we have that 
	\begin{equation*}
	(\delta I + (\Theta^{-1}+\rho I )^{-1})^{-1}_{ii}\approx \frac{1}{\delta} \quad \hbox{ or } \quad (\delta I + (\Theta^{-1}+\rho I )^{-1})^{-1}_{ii}\approx \frac{\rho}{\delta \rho +1},
	\end{equation*} 
	and such values are expected to remain unchanged in the following PS-IPM steps. This suggests that, when  close enough to convergence, any computed approximation of the matrix $S(\Theta)$ may be used as an effective preconditioner  also for subsequent PS-IPM steps.
\end{remark}

In Lemma \ref{lem:dumping_factors} we show that the regularization parameters $(\rho,\delta)$ act as \textit{dumping coefficients}, see \eqref{eq:diagonal_variation}, for the variations $\widehat{\Theta}^{-1}_{ii} -\Theta^{-1}_{ii}$
where $\widehat{\Theta}^{-1}$ and $\Theta^{-1}$ are two IPM matrices obtained, respectively, in two different IPM iterations.	

\begin{lemma}\label{lem:dumping_factors}
	Define 
	$$D_A:=(\delta I + (\widehat{\Theta}^{-1}+\rho I )^{-1})^{-1} -(\delta I + (\Theta^{-1}+\rho I )^{-1})^{-1}$$
	and 
	$$D_C:=\widehat{\Theta}^{-1} -\Theta^{-1}.$$
	Then,
	\begin{equation}\label{eq:approx_ineq}
	\|{S}(\widehat{\Theta})-S(\Theta)\|_2=\|D_A\|_2<\|D_C\|_2=\|{\mathcal{N}}_{C}(\widehat{\Theta})-{\mathcal{N}}_{C}(\Theta)\|_2.
	\end{equation}
\end{lemma}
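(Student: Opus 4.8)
The statement compares the operator-norm variation of two quantities across IPM iterations: the diagonal matrix $D_C = \widehat{\Theta}^{-1} - \Theta^{-1}$ appearing in the standard Newton matrix, and the matrix $D_A$ built from the "separated" scaling $(\delta I + (\Theta^{-1}+\rho I)^{-1})^{-1}$. Since both $\Theta^{-1}$ and $\widehat{\Theta}^{-1}$ are diagonal with positive entries, everything reduces to a scalar (entrywise) estimate: writing $t := \Theta^{-1}_{ii}>0$ and $\hat t := \widehat{\Theta}^{-1}_{ii}>0$, I need to show
\begin{equation*}
\left| \phi(\hat t) - \phi(t) \right| < \left| \hat t - t \right|, \qquad \phi(s) := \frac{1}{\delta + \frac{1}{s+\rho}} = \frac{s+\rho}{\delta(s+\rho)+1},
\end{equation*}
for all $s>0$, and then take the maximum over $i$ on both sides (the operator norm of a diagonal matrix is the max of the absolute values of its entries, so $\|D_A\|_2 = \max_i |\phi(\hat t_i)-\phi(t_i)|$ and $\|D_C\|_2 = \max_i |\hat t_i - t_i|$).

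The key step is a derivative bound: I would compute $\phi'(s)$ and show $0 < \phi'(s) < 1$ for all $s > 0$. A direct differentiation of $\phi(s) = (s+\rho)/(\delta(s+\rho)+1)$ gives $\phi'(s) = 1/(\delta(s+\rho)+1)^2$, which is manifestly positive, and is strictly less than $1$ because $\delta(s+\rho)+1 > 1$ whenever $\delta,\rho>0$ and $s>0$ (indeed $\delta s > 0$ already forces the denominator strictly above $1$). By the mean value theorem, $|\phi(\hat t)-\phi(t)| = \phi'(\xi)\,|\hat t - t| < |\hat t - t|$ for some $\xi$ between $t$ and $\hat t$ (assuming $\hat t \neq t$ on that coordinate; if $\hat t = t$ the coordinate contributes $0$ to both sides). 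This yields the strict entrywise inequality, hence $\|D_A\|_2 \le \|D_C\|_2$, and strictness of the operator-norm inequality follows as long as $D_C \neq 0$, i.e. the two IPM matrices genuinely differ — which is the implicit assumption in the statement.

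The only mild subtlety — and the main thing to be careful about — is the strictness claim for the operator norms rather than just $\le$. The index $i^\star$ achieving $\|D_C\|_2 = |\hat t_{i^\star} - t_{i^\star}|$ satisfies $|\phi(\hat t_{i^\star})-\phi(t_{i^\star})| < |\hat t_{i^\star}-t_{i^\star}| = \|D_C\|_2$, and every other index is bounded by the same strict inequality against its own $|\hat t_i - t_i| \le \|D_C\|_2$; taking the max gives $\|D_A\|_2 < \|D_C\|_2$. I would also record the two algebraic identities $\|D_A\|_2 = \|S(\widehat\Theta)-S(\Theta)\|_2$ and $\|D_C\|_2 = \|\mathcal N_C(\widehat\Theta)-\mathcal N_C(\Theta)\|_2$, which are immediate since $S(\Theta)$ and $\mathcal N_C(\Theta)$ differ from their hatted counterparts only in the $(1,1)$ diagonal block (the $H$, $A$, $\rho I$, $\delta I$ blocks are iteration-independent), and the operator norm of a block-diagonal-plus-zeros perturbation equals the norm of the nonzero block. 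Overall the proof is short; the "hard part" is merely making sure the denominator estimate $\delta(s+\rho)+1>1$ is invoked with the correct strictness and that the step where a coordinate has $\hat t_i = t_i$ is handled cleanly.
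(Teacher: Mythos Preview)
Your argument is correct and follows the same overall structure as the paper: reduce to an entrywise scalar inequality for the diagonal matrices $D_A$ and $D_C$, then pass to the $2$-norm via the maximum. The only difference is in how the scalar step is handled. You bound the derivative $\phi'(s)=1/(\delta(s+\rho)+1)^2<1$ and invoke the mean value theorem, whereas the paper computes the difference $\phi(\hat t)-\phi(t)$ in closed form,
\[
(D_A)_{ii}=\frac{\widehat{\Theta}_{ii}^{-1}-\Theta_{ii}^{-1}}{1+\delta^2(\Theta_{ii}^{-1}+\rho)(\widehat{\Theta}_{ii}^{-1}+\rho)+\delta(\Theta_{ii}^{-1}+\rho)+\delta(\widehat{\Theta}_{ii}^{-1}+\rho)},
\]
and reads off that the denominator exceeds $1$. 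Both arrive at $|(D_A)_{ii}|<|(D_C)_{ii}|$ equally cleanly; the paper's explicit formula has the small advantage of being quoted later (Remark~\ref{rem:2}) to exhibit explicitly how the damping depends on $\rho,\delta$, which your derivative expression conveys just as well. Your careful handling of strictness at the norm level and of the block-structure identities $\|S(\widehat\Theta)-S(\Theta)\|_2=\|D_A\|_2$, $\|\mathcal N_C(\widehat\Theta)-\mathcal N_C(\Theta)\|_2=\|D_C\|_2$ is in fact more explicit than the paper's.
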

\begin{proof}
	From direct computation we have
	\begin{equation}\label{eq:diagonal_variation}
	(D_A)_{ii}=\frac{\widehat{\Theta}_{ii}^{-1} -\Theta_{ii}^{-1} }{1 + \delta^2 ({\Theta}_{ii}^{-1}+\rho)(\widehat{\Theta}_{ii}^{-1}+\rho)+\delta ({\Theta}_{ii}^{-1}+\rho)+\delta(\widehat{\Theta}_{ii}^{-1}+\rho)}.
	\end{equation}
	Then,
	\begin{equation*}
	|(D_A)_{ii}|<|(D_C)_{ii}|,
	\end{equation*}
	and the thesis follows using the definitions of $S(\Theta)$ and $\mathcal{N}_C(\Theta)$.
\end{proof}

\begin{remark}\label{rem:2} 
	Suppose we computed a preconditioner for ${S}(\Theta)$, e.g., an incomplete factorization. Equation \eqref{eq:approx_ineq} shows that any accurate preconditioner for $S(\Theta)$ approximates ${S}(\widehat{\Theta})$ better than an analogous preconditioner 
	for ${\mathcal{N}}_{C}(\Theta)$ would approximate ${\mathcal{N}}_{C}(\widehat{\Theta})$. 
	
	Moreover, from \eqref{eq:diagonal_variation}, we can observe that the variations $(D_{A})_{ii}$, and hence the variation $\|{S}(\widehat{\Theta})-S(\Theta)\|_2$ of the overall Schur complement,  are negatively correlated with the regularization parameters $(\rho, \delta)$. Then, it has to be expected that the computed preconditioner for $S(\Theta)$ would be yet an effective preconditioner for the matrix ${S}(\widehat{\Theta})$ if the regularization parameters $(\rho, \delta)$ are sufficiently large. 
	
	On the other hand, according to \eqref{eq:rate_of_convergence},  the rate of convergence of PPM correlates inversely  with the regularization parameters $(\rho, \delta)$. 
	
	As a result of the above discussion, we are able to unveil a \textit{precise interaction} between the computational footprint related to the necessity of re-computing preconditioners and the rate of convergence of the PS-IPM with the obvious benefit to allow  a predictable tuning of such trade-off (see Section \ref{sec:num_res_p2}). 
	\end{remark}
To conclude this section, in Theorem \ref{lem:clustering_eig}, we analyse in more detail the eigenvalues of the matrix ${S}({\Theta})^{-1}{S}(\widehat{\Theta})$. Indeed, supposing we have computed an accurate preconditioner for ${S}({\Theta})$, then, the eigenvalues of ${S}({\Theta})^{-1}{S}(\widehat{\Theta})$ may be considered as a measure of the effectiveness of such preconditioner when used as preconditioner for ${S}(\widehat{\Theta})$: the results there contained will further confirm that a high quality preconditioner is expected when the matrix $D_A$ has small diagonal elements, see \eqref{eq:cluster_interval}. In this case, indeed  ${S}({\Theta})^{-1}{S}(\widehat{\Theta})$ has a highly clustered spectrum.

\begin{theorem}\label{lem:clustering_eig}
	Let us define $$H_{A,\Theta, \delta, \rho}:=H+\rho I+\frac{1}{\delta}A^TA+ (\delta I+({\Theta}^{-1}+ \rho I)^{-1})^{-1}.$$ Then, the matrix ${S}({\Theta})^{-1}{S}(\widehat{\Theta})$ has the eigenvalue $\eta=1$ with multiplicity at least $m_1$ whereas, the other eigenvalues, are s.t.
	{\begin{equation} \label{eq:cluster_interval}
		\eta \in (1+ \frac{\min_{i}\lambda_i(D_A) }{\max_{i}\lambda_i(H_{A,\Theta, \delta, \rho})}, 1+\frac{\max_{i}\lambda_i(D_A) }{\min_{i}\lambda_i(H_{A,\Theta, \delta, \rho})})  ).
		\end{equation}}
\end{theorem}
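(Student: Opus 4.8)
The plan is to exploit the block structure of $S(\Theta)$ and $S(\widehat\Theta)$. Write $S(\Theta)=\begin{bmatrix} H+\rho I + E(\Theta) & A^T \\ A & -\delta I\end{bmatrix}$, where $E(\Theta):=(\delta I+(\Theta^{-1}+\rho I)^{-1})^{-1}$, and similarly for $\widehat\Theta$. Since $-\delta I$ is invertible, I would first eliminate the $(2,2)$-block: form the (primal) Schur complement of $-\delta I$ in each of $S(\Theta)$ and $S(\widehat\Theta)$, which gives exactly $H_{A,\widehat\Theta,\delta,\rho}=H+\rho I+\frac1\delta A^TA+E(\widehat\Theta)$ and $H_{A,\Theta,\delta,\rho}=H+\rho I+\frac1\delta A^TA+E(\Theta)$. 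The key algebraic identity to establish is that $S(\Theta)^{-1}S(\widehat\Theta)$ is similar to a block-triangular matrix whose diagonal blocks are $I_m$ (the dual part, of size $m_1$, the number of $\mathbf y_1$-variables) and $H_{A,\Theta,\delta,\rho}^{-1}H_{A,\widehat\Theta,\delta,\rho}$ (the primal part). This is a standard fact about Schur complements of matrices that differ only in one block, but I would verify it directly by writing $S(\Theta)=L\,\mathrm{blockdiag}(H_{A,\Theta,\delta,\rho},-\delta I)\,L^T$ for a suitable unit block-triangular $L$ that is \emph{the same} for both $S(\Theta)$ and $S(\widehat\Theta)$ (since the off-diagonal block $A^T$ and the $(2,2)$-block $-\delta I$ are common). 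Then $S(\Theta)^{-1}S(\widehat\Theta)=L^{-T}\,\mathrm{blockdiag}(H_{A,\Theta,\delta,\rho}^{-1}H_{A,\widehat\Theta,\delta,\rho},\,I_m)\,L^{T}$, which immediately yields the eigenvalue $1$ with multiplicity at least $m$ (I would reconcile this with the stated $m_1$; if $A$ is $m_1\times(\cdot)$ then $m=m_1$) and reduces the remaining eigenvalues to those of $H_{A,\Theta,\delta,\rho}^{-1}H_{A,\widehat\Theta,\delta,\rho}$.

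Next I would localize $\mathrm{spec}(H_{A,\Theta,\delta,\rho}^{-1}H_{A,\widehat\Theta,\delta,\rho})$. Write $H_{A,\widehat\Theta,\delta,\rho}=H_{A,\Theta,\delta,\rho}+D_A$, where $D_A=E(\widehat\Theta)-E(\Theta)$ is exactly the matrix of Lemma \ref{lem:dumping_factors} (note $E(\Theta)=(\delta I+(\Theta^{-1}+\rho I)^{-1})^{-1}$ matches the definition there). Both $H_{A,\Theta,\delta,\rho}$ and $D_A$ are symmetric, and $H_{A,\Theta,\delta,\rho}\succ 0$ since $H\succeq0$, $\rho>0$, $\frac1\delta A^TA\succeq0$, and $E(\Theta)\succ0$ (it is the inverse of a sum of positive definite diagonal matrices). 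For any eigenpair $(\eta,\mathbf v)$ of $H_{A,\Theta,\delta,\rho}^{-1}H_{A,\widehat\Theta,\delta,\rho}$ we have $H_{A,\widehat\Theta,\delta,\rho}\mathbf v=\eta\,H_{A,\Theta,\delta,\rho}\mathbf v$, hence $D_A\mathbf v=(\eta-1)H_{A,\Theta,\delta,\rho}\mathbf v$, and taking the Rayleigh quotient against $\mathbf v$ (which is nonzero) gives $\eta-1=\frac{\mathbf v^TD_A\mathbf v}{\mathbf v^TH_{A,\Theta,\delta,\rho}\mathbf v}$. Bounding numerator and denominator by their extreme eigenvalues, $\min_i\lambda_i(D_A)\le \mathbf v^TD_A\mathbf v/\|\mathbf v\|^2\le\max_i\lambda_i(D_A)$ and $\min_i\lambda_i(H_{A,\Theta,\delta,\rho})\le \mathbf v^TH_{A,\Theta,\delta,\rho}\mathbf v/\|\mathbf v\|^2$, one obtains precisely the inclusion \eqref{eq:cluster_interval}; the sign bookkeeping for $D_A$ (which may be indefinite) is handled by using the min on the left and the max on the right with the smallest denominator eigenvalue.

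The main obstacle I anticipate is the linear-algebra bookkeeping in the first step: getting the block factorization $S(\Theta)=L\,\mathrm{blockdiag}(\cdot,-\delta I)\,L^T$ with an $L$ that genuinely does \emph{not} depend on $\Theta$, and then correctly reading off that $S(\Theta)^{-1}S(\widehat\Theta)$ is similar (not merely has the same characteristic polynomial) to the block-triangular form — one must be careful that $L^{T}$ on the right and $L^{-T}$ on the left are exact inverses and that the conjugation is legitimate. A secondary subtlety is reconciling the multiplicity count "$m_1$'' with the block sizes as written in \eqref{eq:Newton_System_decoupled_permuted}; I would state explicitly that $m_1$ denotes the row-dimension of $A$ (the size of the $\Delta\mathbf y_1$ block). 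Everything after that is the elementary Rayleigh-quotient estimate above, which is routine.
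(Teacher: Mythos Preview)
Your proposal is correct and reaches exactly the same Rayleigh-quotient identity
\[
\eta-1=\frac{\mathbf u_1^{T}D_A\,\mathbf u_1}{\mathbf u_1^{T}H_{A,\Theta,\delta,\rho}\,\mathbf u_1}
\]
that the paper derives; the final eigenvalue localization is then identical. The only organisational difference is in the first step: the paper works directly with the generalized eigenvalue problem $S(\widehat\Theta)\mathbf u=\eta\,S(\Theta)\mathbf u$, observes that vectors of the form $[0,\mathbf u_2]^T$ solve it when $\eta=1$, and for $\eta\neq1$ uses the second block row $(1-\eta)(A\mathbf u_1-\delta\mathbf u_2)=0$ to eliminate $\mathbf u_2=\tfrac{1}{\delta}A\mathbf u_1$, arriving at the reduced equation $H_{A,\widehat\Theta,\delta,\rho}\mathbf u_1=\eta\,H_{A,\Theta,\delta,\rho}\mathbf u_1$. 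Your route via the common block factorization $S(\cdot)=L\,\mathrm{blockdiag}(H_{A,\cdot,\delta,\rho},-\delta I)\,L^{T}$ with $L=\begin{bmatrix} I & -\tfrac{1}{\delta}A^{T}\\ 0 & I\end{bmatrix}$ independent of $\Theta$ is a cleaner packaging of the same elimination and in fact yields a slightly stronger conclusion (an explicit similarity $S(\Theta)^{-1}S(\widehat\Theta)\sim\mathrm{blockdiag}(H_{A,\Theta,\delta,\rho}^{-1}H_{A,\widehat\Theta,\delta,\rho},\,I_{m_1})$, not merely the eigenvalue count); the paper's direct argument is shorter and avoids the factorization bookkeeping. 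Your reading of $m_1$ as the row-dimension of $A$ (the size of the $\Delta\mathbf y_1$ block) is the correct one.
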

\begin{proof}
	To analyse the eigenvalues we consider the problem
	\begin{equation*}
	S(\widehat{\Theta})\mathbf{u}= \eta S({\Theta}) \mathbf{u}, 
	\end{equation*}
	i.e.,
	\begin{equation} \label{eq:gener_eig}
	\begin{bmatrix}
	H+\rho I+(\delta I +(\widehat{\Theta}^{-1}+ \rho I)^{-1})^{-1}& A^T \\
	A       & -\delta I
	\end{bmatrix}\begin{bmatrix}
	\mathbf{u}_1 \\
	\mathbf{u}_2
	\end{bmatrix}= \eta \begin{bmatrix}
	H+\rho I+(\delta I+({\Theta}^{-1}+ \rho I)^{-1})^{-1}& A^T \\
	A       & -\delta I
	\end{bmatrix} \begin{bmatrix}
	\mathbf{u}_1 \\
	\mathbf{u}_2
	\end{bmatrix}. 
	\end{equation}
	
	If $\eta=1$, we obtain that any vector of the form $[0,\mathbf{u}_2]$ is a solution of \eqref{eq:gener_eig} and hence the multiplicity of eigenvalue $1$ is at least $m_1$. Let us suppose $\eta \neq 1$. Always from \eqref{eq:gener_eig} we obtain $A\mathbf{u}_1=\delta \mathbf{u}_2$ and hence, using the equality
	
	\begin{equation*}
		(H+\rho I+(\delta I +(\widehat{\Theta}^{-1}+ \rho I)^{-1})^{-1}+\frac{1}{\delta}A^TA)\mathbf{u}_1=\eta(H+\rho I+(\delta I +({\Theta}^{-1}+ \rho I)^{-1})^{-1}+\frac{1}{\delta}A^TA)\mathbf{u}_1, 
	\end{equation*}
	we obtain
	\begin{equation}\label{eq:eqig_expr}
	\eta = 1 + \frac{\mathbf{u}_1^T((\delta I+(\widehat{\Theta}^{-1}+ \rho I)^{-1})^{-1} -(\delta I+({\Theta}^{-1}+ \rho I)^{-1})^{-1})\mathbf{u}_1}{\mathbf{u}_1^T((H+\rho I+\frac{1}{\delta}A^TA+ (\delta I+({\Theta}^{-1}+ \rho I)^{-1})^{-1})\mathbf{u}_1 }.
	\end{equation}
	Thesis follows from \eqref{eq:eqig_expr} using the definition of $D_A$, $H_{A,\Theta, \delta, \rho}$ and the fact that both are  symmetric matrices. 
	\end{proof}

\subsubsection{Further Schur complement reduction} \label{sec:further_schur}
In some particular cases, using once more \eqref{eq:block_solution}, it might be computationally advantageous to further reduce the solution of the linear system in \eqref{eq:Newton_Solution_sys1} to a smaller linear system involving its Schur complement. Among other situations, this is the case of IPM matrices coming from problems of the form \eqref{eq:QP_problem_slack} for which $H$ is diagonal or where $H=0$, see, e.g., the LP case.  In this section we prove a similar result to Lemma~\ref{lem:dumping_factors}  when the involved matrices are the Schur complements of the linear systems \eqref{eq:Scur_Separation} and \eqref{eq:stadard_newton}. To this aim  and for the sake of simplicity, we consider the case $H=0$ and define $L_1(\Theta)$ as the Schur complement of \eqref{eq:Scur_Separation}, i.e.,

\begin{equation} \label{eq:Schur_further}
L_1(\Theta):=-(\delta I +A(\rho I +(\delta I +(\Theta^{-1}+\rho I)^{-1})^{-1})^{-1}A^T),
\end{equation}
whereas we define $L_2(\Theta)$ as the Schur complement of \eqref{eq:stadard_newton}, i.e.,
\begin{equation*}
L_2(\Theta):=-(\delta I +A(\rho I + \Theta^{-1})^{-1}A^T).
\end{equation*}

Moreover, considering diagonal scaling matrices 
$\widehat{\Theta}^{-1}$   and   $\Theta^{-1}$  obtained, 
respectively in two different IPM iterations, 
we have 

\begin{equation*}
{L}_1(\widehat{\Theta})-L_1(\Theta) = A[\underbrace{(\rho I +(\delta I +(\rho I+\Theta^{-1} )^{-1})^{-1})^{-1}-(\rho I +(\delta I +(\rho I+\widehat{\Theta}^{-1} )^{-1})^{-1})^{-1}}_{=:\Delta_{1,\widehat{\Theta},\Theta}}]A^T
\end{equation*}
whereas
\begin{equation*}
{L}_2(\widehat{\Theta})-L_2(\Theta) = A[\underbrace{(\rho I+\Theta^{-1} )^{-1})-(\rho I+\widehat{\Theta}^{-1} )^{-1}}_{=:\Delta_{2,\widehat{\Theta},\Theta}}]A^T.
\end{equation*}

We are ready to state Lemma \ref{lem:dunmpig_schur_schur}, which guarantees that also when operating a further reduction to the Schur complement in \eqref{eq:Scur_Separation}, the regularization parameters $(\rho,\delta)$ act as \textit{dumping factors} for the changes in the diagonal matrix $|\Delta_{2,\widehat{\Theta},\Theta}|$. 

\begin{lemma} \label{lem:dunmpig_schur_schur}
	With the notation introduced above,  we have
	
	\begin{equation}\label{eq:further_Schur_diagonal_variation}
		|(\Delta_{1,\widehat{\Theta},\Theta})_{ii}|<|(\Delta_{2,\widehat{\Theta},\Theta})_{ii}|
	\end{equation}  	
	
	and
	
	\begin{equation*} \label{eq:approx_ineq_schur}
	\|A|\Delta_{1,\widehat{\Theta},\Theta}|A^T\|_2<\|A|\Delta_{2,\widehat{\Theta},\Theta}|A^T\|_2 . 
	\end{equation*}
\end{lemma}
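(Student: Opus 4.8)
The plan is to reduce the matrix inequality $\|A|\Delta_{1,\widehat{\Theta},\Theta}|A^T\|_2<\|A|\Delta_{2,\widehat{\Theta},\Theta}|A^T\|_2$ to the scalar, diagonal-wise inequality $|(\Delta_{1,\widehat{\Theta},\Theta})_{ii}|<|(\Delta_{2,\widehat{\Theta},\Theta})_{ii}|$ in \eqref{eq:further_Schur_diagonal_variation}, and then to establish the latter by a direct computation in the spirit of the proof of Lemma~\ref{lem:dumping_factors}. First I would fix the index $i$ and work with the scalars $t:=\Theta^{-1}_{ii}>0$ and $\widehat t:=\widehat{\Theta}^{-1}_{ii}>0$. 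Since both $\Theta^{-1}+\rho I$ and $\delta I+(\Theta^{-1}+\rho I)^{-1}$ are diagonal, the quantity $(\Delta_{2,\widehat{\Theta},\Theta})_{ii}=\frac{1}{\rho+t}-\frac{1}{\rho+\widehat t}=\frac{\widehat t-t}{(\rho+t)(\rho+\widehat t)}$ is immediate, and $(\Delta_{1,\widehat{\Theta},\Theta})_{ii}$ is obtained by composing the scalar map $u\mapsto\big(\rho+(\delta+(\rho+u)^{-1})^{-1}\big)^{-1}$ at $u=t$ and $u=\widehat t$. The key observation is monotonicity and contractivity: the map $\phi(u):=\big(\rho+(\delta+(\rho+u)^{-1})^{-1}\big)^{-1}$ is a composition of the increasing maps $u\mapsto\rho+u$, $v\mapsto v^{-1}$, $w\mapsto \delta+w$, $w\mapsto w^{-1}$, $v\mapsto\rho+v$, $v\mapsto v^{-1}$ (an even number of inversions), so $\phi$ is increasing, hence $\Delta_{1}$ and $\Delta_{2}$ have the same sign; and by an explicit algebraic simplification (as in \eqref{eq:diagonal_variation}) one gets a representation of the form
\begin{equation*}
(\Delta_{1,\widehat{\Theta},\Theta})_{ii}=\frac{(\Delta_{2,\widehat{\Theta},\Theta})_{ii}}{1+\delta^2(\rho+t)(\rho+\widehat t)+\delta(\rho+t)+\delta(\rho+\widehat t)},
\end{equation*}
where the denominator is strictly greater than $1$ because $\rho,\delta>0$, giving \eqref{eq:further_Schur_diagonal_variation}. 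In fact one may bypass recomputation by noting that $\big(\delta+(\rho+u)^{-1}\big)^{-1}=(\rho+u)/\big(1+\delta(\rho+u)\big)$, so $\rho+\phi(u)^{-1}=\rho+\delta+(\rho+u)^{-1}$, and therefore $\phi(t)-\phi(\widehat t)$ differs from $(\rho+t)^{-1}-(\rho+\widehat t)^{-1}=(\Delta_{2})_{ii}$ only by the reciprocal of a product of two factors each exceeding $1$; this is exactly the same damping mechanism appearing in Lemma~\ref{lem:dumping_factors}, applied one level deeper.

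Having secured the diagonal inequality, the passage to the $2$-norm statement is the routine part. Writing $D_1:=|\Delta_{1,\widehat{\Theta},\Theta}|$ and $D_2:=|\Delta_{2,\widehat{\Theta},\Theta}|$, both diagonal and positive semidefinite, \eqref{eq:further_Schur_diagonal_variation} yields $0\preceq D_1\preceq D_2$ in the Loewner order (with strict inequality on the diagonal), hence $0\preceq A D_1 A^T\preceq A D_2 A^T$, and since both matrices are symmetric positive semidefinite, monotonicity of the largest eigenvalue gives $\|A D_1 A^T\|_2\le\|A D_2 A^T\|_2$. To get the strict inequality one argues as follows: write $D_2=D_1+E$ with $E$ diagonal and strictly positive (every diagonal entry of $E$ is $>0$ by \eqref{eq:further_Schur_diagonal_variation}), so that $\lambda_{\max}(AD_2A^T)=\lambda_{\max}(AD_1A^T+AEA^T)\ge \mathbf{v}^T(AD_1A^T)\mathbf{v}+\mathbf{v}^T(AEA^T)\mathbf{v}$ for $\mathbf{v}$ a unit top eigenvector of $AD_1A^T$; and $\mathbf{v}^T A E A^T\mathbf{v}=\sum_i E_{ii}(A^T\mathbf{v})_i^2>0$ unless $A^T\mathbf{v}=0$, which is impossible since it would force $AD_1A^T\mathbf{v}=0$, i.e.\ $\lambda_{\max}(AD_1A^T)=0$, a degenerate case that can be excluded (or handled separately by noting that then $AD_2A^T$ also has norm either zero — when $A=0$ — and there is nothing to prove, or positive norm, giving the strict inequality trivially). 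Thus $\|A|\Delta_{1,\widehat{\Theta},\Theta}|A^T\|_2<\|A|\Delta_{2,\widehat{\Theta},\Theta}|A^T\|_2$.

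The main obstacle I anticipate is not conceptual but bookkeeping: carrying out the algebraic simplification of $\phi(t)-\phi(\widehat t)$ cleanly enough to exhibit the explicit damping factor $1+\delta^2(\rho+t)(\rho+\widehat t)+\delta(\rho+t)+\delta(\rho+\widehat t)$, and being careful that this quantity is genuinely $>1$ for all admissible $t,\widehat t>0$ (it is, since every added term is positive). A secondary subtlety is the strictness in the norm inequality when $A$ is rank-deficient — but as noted this only fails in the trivial case $A=0$, and in the regularized setting one may simply state the result for $A\neq 0$ or absorb the degenerate case into the statement. Everything else follows the template already established in Lemma~\ref{lem:dumping_factors} and its proof, so I would keep the write-up short and point to that proof for the shared computations.
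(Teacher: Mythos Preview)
Your plan matches the paper's proof almost exactly: the paper also computes $(\Delta_{1,\widehat\Theta,\Theta})_{ii}$ explicitly, exhibits it as $(\Delta_{2,\widehat\Theta,\Theta})_{ii}$ divided by a product of two factors each strictly larger than $1$, and then passes to the norm inequality via the Rayleigh quotient (which is the same content as your Loewner-order argument, just phrased differently). So the strategy is right.

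However, the bookkeeping you flagged as the main risk does go wrong in your write-up. The displayed damping factor
\[
1+\delta^2(\rho+t)(\rho+\widehat t)+\delta(\rho+t)+\delta(\rho+\widehat t)=(1+\delta(\rho+t))(1+\delta(\rho+\widehat t))
\]
is the one from Lemma~\ref{lem:dumping_factors}, not the one that arises here. A direct computation (and this is what the paper does) gives
\[
(\Delta_{1,\widehat\Theta,\Theta})_{ii}
=\frac{(\Delta_{2,\widehat\Theta,\Theta})_{ii}}
{\bigl(\rho(\delta+(\rho+t)^{-1})+1\bigr)\bigl(\rho(\delta+(\rho+\widehat t)^{-1})+1\bigr)},
\]
i.e.\ each factor is $\rho\delta+1+\rho/(\rho+t)$, not $1+\delta(\rho+t)$. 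These are genuinely different numbers; the outer inversion $(\rho+\cdot)^{-1}$ does not simply replay the Lemma~\ref{lem:dumping_factors} computation ``one level deeper''. Relatedly, your claimed identity $\rho+\phi(u)^{-1}=\rho+\delta+(\rho+u)^{-1}$ is false: since $\phi(u)^{-1}=\rho+(\delta+(\rho+u)^{-1})^{-1}$, it would force $(\delta+(\rho+u)^{-1})^{-1}=\delta+(\rho+u)^{-1}$. None of this breaks the proof, because the correct factor is still a product of two terms each exceeding $1$, so \eqref{eq:further_Schur_diagonal_variation} follows; but you should replace the incorrect formula and drop the bogus shortcut.

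For the norm inequality your Loewner-order route ($0\preceq D_1\preceq D_2\Rightarrow\lambda_{\max}(AD_1A^T)\le\lambda_{\max}(AD_2A^T)$, with strictness via the top eigenvector) is correct and, if anything, slightly cleaner about the strict inequality than the paper's Rayleigh-quotient chain.
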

\begin{proof}

From direct computation, we have that

\begin{equation*}\label{eq:Schur_Diag_ineq}
	\begin{split}
	(\Delta_{1,\widehat{\Theta},\Theta})_{ii}=&\frac{\widehat{\Theta}_{ii}^{-1} - {\Theta}_{ii}^{-1}}{(\rho(\delta + (\widehat{\Theta}^{-1}+\rho I)_{ii}^{-1})+1) (\rho(\delta +({\Theta}^{-1}+\rho I)_{ii}^{-1})+1)(\widehat{\Theta}^{-1}_{ii}+\rho)({\Theta}^{-1}_{ii}+\rho)} = \\
	& \frac{1}{(\rho(\delta + (\widehat{\Theta}^{-1}+\rho I)_{ii}^{-1})+1) (\rho(\delta +({\Theta}^{-1}+\rho I)_{ii}^{-1})+1)}\frac{\widehat{\Theta}_{ii}^{-1} - {\Theta}_{ii}^{-1}}{(\widehat{\Theta}^{-1}_{ii}+\rho)({\Theta}^{-1}_{ii}+\rho)},
	\end{split}
\end{equation*}
from which, observing that $(\Delta_{2,\widehat{\Theta},\Theta})_{ii}=\frac{\widehat{\Theta}_{ii}^{-1} - {\Theta}_{ii}^{-1}}{(\widehat{\Theta}^{-1}_{ii}+\rho)({\Theta}^{-1}_{ii}+\rho)}$, follows \eqref{eq:further_Schur_diagonal_variation}. The second part of the statement follows observing that

\begin{equation*}
\begin{split}
\|A|\Delta_{1,\widehat{\Theta},\Theta}|A^T\|_2 = & \max_{\mathbf{x}} \frac{\mathbf{x}^TA|\Delta_{1,\widehat{\Theta},\Theta}|A^T\mathbf{x}}{\mathbf{x}^T\mathbf{x}} =  \\
& \max_{\mathbf{x}} \frac{\mathbf{x}^TA|\Delta_{1,\widehat{\Theta},\Theta}|A^T\mathbf{x}}{\mathbf{x}^TAA^T\mathbf{x}}\frac{\mathbf{x}^TAA^T\mathbf{x}}{\mathbf{x}^T\mathbf{x}} < \\
& \max_{\mathbf{x}} \frac{\mathbf{x}^TA|\Delta_{2,\widehat{\Theta},\Theta}|A^T\mathbf{x}}{\mathbf{x}^TAA^T\mathbf{x}}\frac{\mathbf{x}^TAA^T\mathbf{x}}{\mathbf{x}^T\mathbf{x}} =\|A|\Delta_{2,\widehat{\Theta},\Theta}|A^T\|_2, 
\end{split}
\end{equation*}
where in the last inequality we used \eqref{eq:further_Schur_diagonal_variation}.

\end{proof}

\section{Numerical Results: PS-IPM \& iterative solvers} \label{sec:num_res_p2}
In this section we present the computational results obtained using Algorithm \ref{alg:PS-MF-IPM} when the problem is reformulated as  in  \eqref{eq:QP_problem_slack} and the corresponding linear systems arising from the PS-IPM subproblems are solved using \eqref{eq:Newton_Solution}.  In particular, in the first part of this section we consider the case when the linear system \eqref{eq:Newton_Solution_sys1} is solved 
without further reduction to Schur complement, whereas, in the second one, we present the numerical results for this case (see also Section \ref{sec:further_schur}).

\subsection{GMRES+\texttt{ldl}} \label{sec:gmres_ldl}
In the first case, for the solution of the linear system \eqref{eq:Newton_Solution_sys1}, we use  \texttt{GMRES(100,1)} \cite{MR848568}. Moreover, as suggested in the discussion in Section \ref{sec:preconditioning}, as preconditioner of a given Schur complement $S(\widehat{\Theta})$, we use the \texttt{ldl} decomposition of $S({\Theta})$ computed in a previous PS-IPM iteration. It is important to note that when GMRES is applied to a non-normal matrix its convergence behaviour is not fully determined by its spectral distribution, or better, its spectral distribution is completely irrelevant \cite{MR1397238}.
Nevertheless, when $\|S(\Theta) -  S(\widehat{\Theta})\|_2 $ is small, we expect the matrix ${S}({\Theta})^{-1}{S}(\widehat{\Theta})$ to be \textit{close} to the identity. In this case, and in general for symmetric matrices, the spectral distribution is fully  descriptive of GMRES behaviour \cite[Cor. 6.33]{MR1990645} and hence we expect {GMRES} to behave accordingly to the spectrum of the preconditioned matrix as in Theorem \ref{lem:clustering_eig}. As previously mentioned, we factorize the Schur complement in \eqref{eq:Scur_Separation} using Matlab's \texttt{ldl} routine  and, in our experiments, this factorization is recomputed if in the current PS-IPM step, GMRES has performed more than the $51\%$ of the maximum allowed iterations in the solution of at least one of the two predictor-corrector systems. The stopping (absolute) tolerance for GMRES is set as $\min(10^{-1},0.8\mu)$ where $\mu$ is the current duality gap.  The other computational details are analogous to those used in Section \ref{sec:num_res_p1}.

All the computational results presented here  are devoted to showcase the theory developed in Section \ref{sec:preconditioning} and, in particular, to show that our proposal needs, in general, a number of factorizations equal to a fraction of the IPM iterations, delivering considerable savings of computational time for problems where the factorization footprint is dominant. 

In the following discussion we will use the ratio $\frac{IPM It.}{Fact.}$ as a measure of the frequency at which the preconditioner is recomputed. Moreover, we will use the ratio $\frac{Kryl. It.}{Fact.}$ as a measure of how much the Krylov iterative solver is able to successfully exploit a given preconditioner: higher ratios are indicative of the fact that the same factorization has been used successfully to solve a greater number of linear systems.

In Tables \ref{tab:part2tab1} and \ref{tab:part2tab2} we report the details for the largest instances of the medium-size LPs and QPs already considered in Section \ref{sec:num_res_p1} when varying the stopping tolerance ($tol$) and when the regularization parameters have been suitably increased w.r.t. the ones used in the aforementioned section. 

The results obtained confirm that the ratio $\frac{IPM It.}{Fact.}$ remains  roughly in the interval $(2.5,6)$ for all the considered problems, see also Figure \ref{fig:fact_ratio}, confirming, in general, that our proposal allows a small number of preconditioner re-computations.  Moreover, as it becomes apparent from Figure \ref{fig:fact_ratio}, when switching from $tol=10^{-5}$ to $tol=10^{-7}$ or $tol=10^{-8}$ the above mentioned ratio tends to increase for the problems \texttt{PILOT87,CVXQP1,LISWET1,LISWET10,POWELL20,SHIP12L} (the same happens for the ratio $\frac{Kryl. It.}{Fact.}$) essentially indicating that the computed preconditioners have been used  to solve successfully a larger number of linear systems during the optimization procedure. 

Indeed, this is in accordance with the observation carried out in Remark \ref{rem:1} of Section \ref{sec:preconditioning} regarding the fact that, when close enough  to convergence, less \textit{re-factorizations} are needed due to the convergence behaviour of the IPM contribution $\Theta^{-1}$ to the matrix $S(\Theta)$ (see equation \eqref{eq:Scur_Separation}).

\begin{table}[hbt]
	\centering
	\scriptsize
	\begin{tabular}{cc|cccccccc}
		\toprule
		\headcol
		LPs &   &\multicolumn{8}{c}{$tol=10^{-5}$}  \\
		\cmidrule{3-10}
		\headcol
		Problem    &    $nnz(A)$              &PPM It. & IPM It. & Kryl. It. & Fact.&   Time(s) & Obj Val  & Reg. & Status \\
		\midrule
		25FV47     &      10,705      & 26       & 26             & 1331 &  8              & 3.26            & 5501.85      & 7.00e-08 & opt \\  
		\rowcol
		80BAU3B    &     29,063      & 28       & 38             & 2447 &  12              & 11.61            & 987224.23      & 7.00e-08 & opt \\  
		D6CUBE  &     43,888           & 16       & 16             & 983 &  5              & 2.45            & 315.50      & 7.00e-08 & opt \\ 
		\rowcol
		FIT2D  &   138,018            & 23       & 23             & 1274 &  7              & 10.84            & -68464.27      & 7.00e-08 & opt \\  
		FIT2P  &    60,784             & 16       & 18             & 1002 &  5              & 5.81            & 68464.32      & 7.00e-08 & opt \\  
		\rowcol
		PILOT87&   73,804             & 33       & 39             & 2467 &  12              & 30.13            & 301.94      & 7.00e-08 & opt \\  
		QAP15 &      110,700       & 15       & 15             & 1045 &  5              & 46.36            & 1041.11      & 4.00e-06 & opt \\ 
		\bottomlinec  
		\headcol
		QPs &   &\multicolumn{8}{c}{$tol=10^{-5}$}  \\
		\cmidrule{3-10}
		\headcol
		Problem   &   $nnz(A)/nnz(H)$               &PPM It. & IPM It. & Kryl. It. & Fact.&   Time(s)&  Obj Val & Reg. & Status \\
		\midrule
		CVXQP1&    40,000/40,400     & 13       & 18             & 1161 &  5              & 23.70            & 108704555.75      & 1.00e-10 & opt  \\  
		\rowcol
		LISWET1&  30,000/10,002     & 29       & 31             & 1946 &  9              & 12.64            & 25.07      & 1.00e-09 & opt  \\ 
		LISWET10&  10,000/10,002   & 31       & 31             & 1863 &  12              & 12.18            & 25.01      & 1.00e-09 & opt  \\  
		\rowcol
		POWELL20&  20,000/10,000    & 5       & 28             & 1828 &  10              & 12.85            & 52089582811.44      & 1.00e-09 & opt  \\ 
		SHIP12L& 16,170/122,433     & 10       & 13             & 918 &  5              & 3.45            & 3018876.58      & 1.27e-09 & opt  \\
		\rowcol
		STCQP1&   13,338/49,109     & 9       & 9             & 410 &  3              & 2.22            & 155143.55      & 7.36e-07 & opt  \\
	\bottomrule	
	\end{tabular}
	\caption{Medium Size LPs and QPs results, $tol=10^{-5}$} \label{tab:part2tab1}
\end{table}

\begin{table}[hbt]
	\centering
	\scriptsize
	\begin{tabular}{cc|cccccccc}
		\toprule
		\headcol
		LPs &   &\multicolumn{8}{c}{$tol=10^{-8}$}  \\
		\cmidrule{3-10}
		\headcol
		Problem    &    $nnz(A)$              &PPM It. & IPM It. & Kryl. It. & Fact.&  Time(s) &Obj Val  & Reg. & Status \\
		\midrule
		25FV47     &       10,705         & 27       & 27             & 1332 &  9              & 3.43            & 5501.85      & 7.00e-08 & opt \\  
		\rowcol
		80BAU3B    &     29,063         & 32       & 42             & 2646 &  14              & 14.70            & 987224.19      & 7.00e-08 & opt \\    
		D6CUBE  &     43,888            & 17       & 17             & 986 &  6              & 3.00            & 315.49      & 7.00e-08 & opt \\   
		\rowcol
		FIT2D  &   138,018              & 26       & 26             & 1302 &  9              & 13.36            & -68464.29      & 7.00e-08 & opt \\ 
		FIT2P  &    60,784            & 19       & 21             & 1184 &  6              & 7.30            & 68464.29      & 7.00e-08 & opt \\  
		\rowcol
		PILOT87&   73,804              & 42       & 78             & 4870 &  21              & 62.73            & 301.80      & 7.00e-08 & opt \\ 
		QAP15 &      110,700          & 22       & 22             & 1478 &  8              & 68.24            & 1040.99      & 4.00e-06 & opt \\ 
		\bottomlinec
		\headcol
		QPs &   &\multicolumn{8}{c}{$tol=10^{-7}$}  \\
		\cmidrule{3-10}
		\headcol
		Problem   &   $nnz(A)/nnz(H)$               &PPM It. & IPM It. & Kryl. It. & Fact.& Time(s) &Obj Val & Reg. & Status \\
		\midrule
		CVXQP1&    40,000/40,400     & 14       & 19             & 1192 &  5              & 23.52            & 108704648.71      & 1.00e-10 & opt  \\  
		\rowcol
		LISWET1&  30,000/10,002      & 44       & 58             & 3584 &  11              & 22.79            & 25.12      & 1.00e-09 & opt  \\  
		LISWET10&  10,000/10,002   & 33       & 33             & 2049 &  12              & 13.47            & 25.01      & 1.00e-09 & opt  \\  
		\rowcol
		POWELL20&  20,000/10,000     & 6       & 29             & 1832 &  10              & 12.89            & 52089582812.49      & 1.00e-09 & opt  \\  
		SHIP12L& 16,170/122,433      & 28       & 33             & 1934 &  6              & 6.75            & 3018876.58      & 1.27e-09 & opt  \\  
		\rowcol
		STCQP1&   13,338/49,109    & 10       & 10             & 412 &  4              & 2.23            & 155143.55      & 7.36e-07 & opt  \\ 
		\bottomrule		
	\end{tabular}
	\caption{Medium Size LPs and QPs results, $tol=10^{-7}$ or $tol= 10^{-8}$}\label{tab:part2tab2}
\end{table}

\begin{figure}[htb!]
	\centering
	\includegraphics[width=0.8 \textwidth]{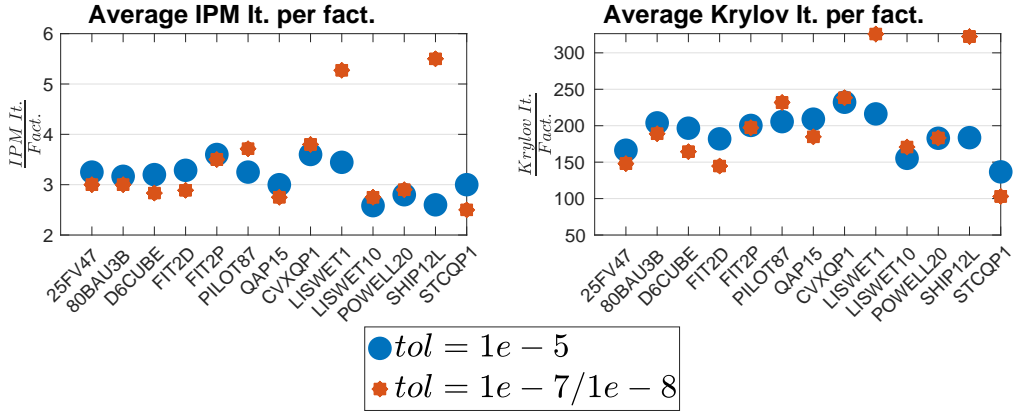}
	\caption{Average IPM and Krylov It. per factorization} \label{fig:fact_ratio}
\end{figure} 

\newpage

In Tables \ref{table:SuiteSparseIterative1} and \ref{table:SuiteSparseIterative2}  we report the details for the  instances of large size considered in Section	\ref{sec:num_res_p1} when the regularization is increased, respectively, by a factor $f=10$ and $f=500$ if compared to the regularization parameters used in Table \ref{table:1}.

In this case, the ratio $\frac{IPM \; It.}{Fact.}$ remains bounded from below by a factor strictly greater than two, clearly indicating that, also in this case, the number of necessary factorizations to optimize successfully a given problem is just a fraction of the total IPM iterations. This fact could lead to reduced computational times for instances in which the effort related to the factorization is dominant, when compared to approaches where the \texttt{ldl} factorization is recomputed at each IPM iteration in order to solve the Newton system (see, e.g., the first part of this work). 

As it becomes apparent from Figure \ref{fig:fact_suite}, when increasing the regularization parameters, the ratios  
$\frac{IPM. It.}{Fact.}$ and $\frac{Kryl. It.}{Fact.}$ tend to increase for the majority of the problems, indicating that the number of computed factorizations can be further reduced. Indeed, this is in accordance with the observation carried out in Remark \ref{rem:2} of Section \ref{sec:preconditioning} regarding the fact that the diagonal variations $(D_{A})_{ii}$ of the Schur complements $S(\Theta)$ are inversely proportional to the regularization parameters $(\rho, \delta)$ (see equation \eqref{eq:diagonal_variation}).

\begin{table}[ht!] 
	\centering
	\scriptsize
	\caption{Large Scale Problems $f=10$} \label{table:SuiteSparseIterative1}
	\begin{tabular}{ccccccccc}
		\topline
		\headcol	{Problem} &{PPM It.} &{IPM It.} & {Kryl. It.} &  {Fact.} &{Time(s)} & {Obj Val}&{Reg. Par.} & {Status} \\ \midline
	Mittelmann/fome21  & 20       & 75             & 5057 &  23              & 700.63            & 47346318912.00      & 5.43e-09 & opt \\
	\rowcol 
	LPnetlib/lp\_cre\_b  & 23       & 48             & 3760 &  16              & 81.57            & 23129639.89      & 5.00e-09 & opt \\ 
	LPnetlib/lp\_cre\_d  & 22       & 46             & 3084 &  18              & 59.21            & 24454969.78      & 5.00e-09 & opt \\ 
	\rowcol 
	LPnetlib/lp\_ken\_18  & 14       & 38             & 2241 &  14              & 215.48            & -52217025287.38      & 5.00e-09 & opt \\ 
	Qaplib/lp\_nug20  & 17       & 17             & 1056 &  8              & 310.74            & 2181.64      & 1.25e-07 & opt \\ 
	\rowcol 
	LPnetlib/lp\_osa\_30  & 19       & 29             & 1548 &  10              & 42.96            & 2142139.87      & 5.00e-09 & opt \\ 
	LPnetlib/lp\_osa\_60  & 17       & 36             & 1992 &  11              & 121.06            & 4044072.51      & 5.00e-09 & opt \\
	\rowcol  
	LPnetlib/lp\_pds\_10  & 19       & 46             & 3239 &  14              & 80.81            & 26727094976.01      & 5.43e-09 & opt \\ 
	LPnetlib/lp\_pds\_20  & 19       & 60             & 4125 &  19              & 339.66            & 23821658640.00      & 5.43e-09 & opt \\ 
	\rowcol 
	LPnetlib/lp\_stocfor3  & 32       & 35             & 1808 &  11              & 19.82            & -39976.78      & 5.00e-09 & opt \\ 
	Mittelmann/pds-100  & 20       & 85             & 5971 &  29              & 5638.99            & 10928229968.00      & 5.00e-09 & opt \\ 
	\rowcol 
	Mittelmann/pds-30  & 22       & 77             & 5087 &  23              & 709.16            & 21385445736.00      & 5.43e-09 & opt \\ 
	Mittelmann/pds-40  & 20       & 75             & 4953 &  23              & 1265.16            & 18855198824.08      & 5.43e-09 & opt \\ 
	\rowcol 
	Mittelmann/pds-50  & 19       & 78             & 5188 &  25              & 1666.61            & 16603525724.02      & 5.43e-09 & opt \\ 
	Mittelmann/pds-60  & 19       & 82             & 5909 &  26              & 2655.46            & 14265904407.03      & 5.43e-09 & opt \\ 
	\rowcol 
	Mittelmann/pds-70  & 20       & 80             & 5763 &  26              & 3511.44            & 12241162812.00      & 5.43e-09 & opt \\ 
	Mittelmann/rail2586  & 34       & 84             & 5734 &  33              & 2412.17            & 936.55      & 5.00e-09 & opt \\
	\rowcol  
	Mittelmann/rail4284  & 35       & 76             & 5353 &  27              & 2892.35            & 1054.89      & 5.00e-09 & opt \\ 
	Mittelmann/rail582  & 35       & 35             & 2461 &  11              & 56.05            & 209.75      & 5.00e-09 & opt \\ 
		\bottomlinec                    
	\end{tabular}
\end{table}

\begin{table}[ht!] 
	\centering
	\scriptsize
	\caption{Large Scale Problems $f=500$} \label{table:SuiteSparseIterative2}
	\begin{tabular}{ccccccccc}
		\topline
		\headcol	{Problem} &{PPM It.} &{IPM It.} & {Kryl. It.} &  {Fact.} &{Time(s)} & {Obj Val}&{Reg. Par.} & {Status} \\ \midline
Mittelmann/fome21  & 19       & 71             & 4757 &  22              & 656.32            & 47346318912.12      & 2.71e-07 & opt \\ 
\rowcol 
LPnetlib/lp\_cre\_b  & 24       & 50             & 3522 &  18              & 84.55            & 23129639.89      & 2.50e-07 & opt \\ 
LPnetlib/lp\_cre\_d  & 21       & 45             & 3076 &  17              & 62.66            & 24454969.77      & 2.50e-07 & opt \\ 
\rowcol 
LPnetlib/lp\_ken\_18  & 15       & 38             & 2332 &  12              & 212.58            & -52217025287.40      & 2.50e-07 & opt \\ 
Qaplib/lp\_nug20  & 17       & 17             & 1116 &  7              & 302.52            & 2181.63      & 6.25e-06 & opt \\ 
\rowcol 
LPnetlib/lp\_osa\_30  & 21       & 27             & 1581 &  10              & 45.61            & 2142139.87      & 2.50e-07 & opt \\ 
LPnetlib/lp\_osa\_60  & 21       & 34             & 1978 &  10              & 115.57            & 4044072.57      & 2.50e-07 & opt \\ 
\rowcol 
LPnetlib/lp\_pds\_10  & 20       & 46             & 3139 &  14              & 81.69            & 26727094976.05      & 2.71e-07 & opt \\ 
LPnetlib/lp\_pds\_20  & 19       & 62             & 4430 &  18              & 350.32            & 23821658639.93      & 2.71e-07 & opt \\ 
\rowcol 
LPnetlib/lp\_stocfor3  & 36       & 39             & 2107 &  10              & 28.49            & -39976.77      & 2.50e-07 & opt \\ 
Mittelmann/pds-100  & 20       & 86             & 6004 &  26              & 5614.32            & 10928229968.01      & 2.50e-07 & opt \\
\rowcol  
Mittelmann/pds-30  & 22       & 77             & 5192 &  23              & 743.52            & 21385445736.02      & 2.71e-07 & opt \\ 
Mittelmann/pds-40  & 20       & 75             & 4823 &  23              & 1216.00            & 18855198824.05      & 2.71e-07 & opt \\
\rowcol  
Mittelmann/pds-50  & 20       & 77             & 5368 &  24              & 1656.96            & 16603525724.00      & 2.71e-07 & opt \\ 
Mittelmann/pds-60  & 19       & 76             & 5207 &  25              & 2317.59            & 14265904407.01      & 2.71e-07 & opt \\
\rowcol  
Mittelmann/pds-70  & 19       & 78             & 5284 &  27              & 3192.52            & 12241162812.00      & 2.71e-07 & opt \\ 
Mittelmann/rail2586  & 34       & 82             & 5509 &  32              & 2389.66            & 936.54      & 2.50e-07 & opt \\ 
\rowcol 
Mittelmann/rail4284  & 35       & 77             & 5203 &  28              & 2799.40            & 1054.90      & 2.50e-07 & opt \\ 
Mittelmann/rail582  & 36       & 36             & 2538 &  12              & 59.42            & 209.73      & 2.50e-07 & opt \\ 
		\bottomlinec                    
	\end{tabular}
\end{table}

\begin{figure}[htb!]
	\centering
	\includegraphics[width=0.8 \textwidth]{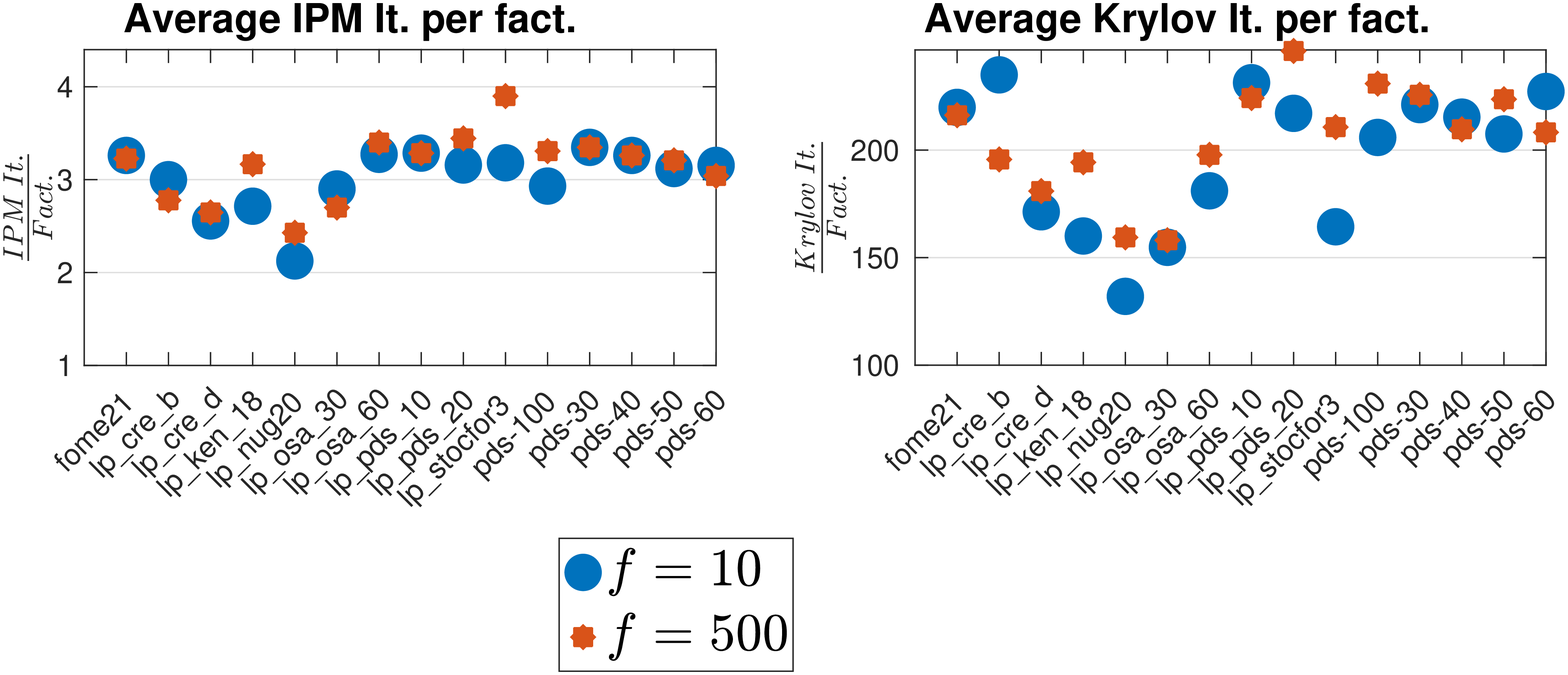}
	\caption{Average IPM and Krylov It. per factorization} \label{fig:fact_suite}
\end{figure}

Moreover, to further assess the robustness of our proposal on large scale problems, we complement the dataset used until now with additional large scale LP instances. We report in Table \ref{table:LargeSparse} the corresponding details.  

\begin{table}[ht!] 
	\centering
	\scriptsize
	\caption{Large Scale Problems, additional dataset} \label{table:LargeSparse}
	\begin{tabular}{cccccccccc}
		\topline
		\headcol	{Problem} & $nnz(A)$ &{PPM It.} &{IPM It.} & {Kryl. It.} &  {Fact.} &{Time(s)} & {Obj Val}&{Reg. Par.} & {Status} \\ \midline
Mittelmann/neos & 1,526,794 	  & 28       & 86             & 5118 &  22              & 1055.82            & 225425492.22      & 5.95e-08 & opt \\ 
\rowcol
Mittelmann/neos3 & 2,055,024 & 9       & 14             & 437 &  3              & 119.47            & 27777.78      & 5.00e-09 & opt \\ 
Mittelmann/nug08-3rd &148,416 & 8       & 8             & 613 &  3              & 322.88            & 214.00      & 7.81e-07 & opt \\ 
\rowcol
Mittelmann/stormG2\_1000  & 3,459,881 	& 20       & 94             & 5897 &  19              & 10048.66            & 15802591.43      & 5.00e-09 & opt \\ 
Meszaros/tp-6 & 11,537,419 & 27       & 34             & 2376 &  9              & 966.40 
           & -13194651.26      & 5.00e-09 & opt \\ 
\rowcol 
Mittelmann/watson\_1  &1,055,093& 31       & 90             & 
5803 &  21              & 834.08            & -1693.39      & 5.00e-09 & opt \\ 
Mittelmann/watson\_2 &1,846,391 & 31       & 104             & 6974 &  28              & 2067.68            & -7544.06      & 5.00e-09 & opt \\ 
\bottomlinec                    
	\end{tabular}
\end{table}

\subsection{PCG+\texttt{chol}}
When a further reduction to Schur complement is considered for the solution of the linear system \eqref{eq:Schur_further}, see Section \ref{sec:further_schur},  we propose to use  {PCG(200)}.
Moreover, as suggested in the discussion carried out in Section \ref{sec:further_schur}, as preconditioner of a given $L_1(\widehat{\Theta})$, we use the {Cholesky} decomposition of $L_1({\Theta})$ computed in a previous PS-IPM iteration. Analogously of what has been done in Section \ref{sec:gmres_ldl},  we factorize a given $L_1({\Theta})$ as \eqref{eq:Schur_further} using Matlab's \texttt{chol} routine  and, in our experiments, this factorization is recomputed if in the current PS-IPM step, PCG has performed more than $51\%$ of the maximum allowed iterations in the solution of at least one of the two predictor-corrector systems.  The stopping (absolute) tolerance for PCG is set as $10^{-1} \cdot tol$ (this choice does not guarantee in general the best performance, see \cite{zanetti_gondzio} for a recent analysis, but it is a robust one).

Aim of this section is to show that also in the current approach the number of necessary factorizations is equal to a fraction of the total number of IPM iterations and that a further reduction to Schur complement might improve computational times when precise criteria are met. For this reason and for the sake of brevity, we present the obtained numerical results only on a selected subset of problems
for which such reduced computational times are observed when compared to those presented in Section \ref{sec:gmres_ldl}.  

In Table \ref{table:Schur_further} we summarize the statistics of the runs of our proposal when Newton linear systems are solved with PCG+\texttt{chol}.

\begin{table}[ht!] 
	\centering
	\scriptsize
	\caption{Results obtained for PCG + \texttt{chol}} \label{table:Schur_further}
	\begin{tabular}{ccccccccc}
		\topline
		\headcol	{Problem} &{PPM It.} &{IPM It.} & {Kryl. It.} &  {Fact.} &{Time(s)} & {Obj Val}&{Reg. Par.} & {Status} \\ \midline
		\midline
		\headcol
		\multicolumn{9}{c}{Compare with Table \ref{tab:part2tab1} ($tol=10^{-5}$)}  \\
	    \midline
	    
		25FV47  & 25       & 25             & 3110 &   8              & 0.66            & 5501.85      & 7.00e-08 & opt \\ 
		80BAU3B  & 35       & 40             & 5356 &   13              & 3.39            & 987224.24      & 7.00e-08 & opt \\ 
		D6CUBE  & 16       & 16             & 1871 &   5              & 0.65            & 315.51      & 7.00e-08 & opt \\ 
		FIT2D  & 23       & 23             & 2576 &   5              & 3.23            & -68464.26      & 7.00e-08 & opt \\ 
		PILOT87  & 39       & 41             & 5498 &   13              & 7.14            & 301.95      & 7.00e-08 & opt \\ 
		\midline
		\headcol
		 \multicolumn{9}{c}{Compare with Table \ref{tab:part2tab2} ($tol=10^{-8}$)}  \\
\midline
		
		25FV47  & 28       & 28             & 3374 &   10              & 0.76            & 5501.85      & 7.00e-08 & opt \\ 
		80BAU3B  & 39       & 44             & 5599 &   15              & 4.04            & 987224.19      & 7.00e-08 & opt \\ 
		D6CUBE  & 18       & 18             & 1878 &   7              & 0.77            & 315.49      & 7.00e-08 & opt \\ 
		FIT2D  & 26       & 26             & 2773 &   6              & 3.58            & -68464.29      & 7.00e-08 & opt \\ 
		PILOT87  & 49       & 95             & 11780 &   20              & 14.96            & 301.79      & 7.00e-08 & opt \\ 
		\midline
		\headcol
		\multicolumn{9}{c}{Compare with Table \ref{table:SuiteSparseIterative1} ($tol=10^{-6}$, $f=10$)}  \\
	
\midline
		LPnetlib/lp\_osa\_30  & 27       & 27             & 3242 &  7              & 13.95            & 2142139.88      & 5.00e-09 & opt  \\  
		LPnetlib/lp\_osa\_60  & 26       & 34             & 4042 &  8              & 48.25            & 4044072.51      & 5.00e-09 & opt  \\  
		LPnetlib/lp\_pds\_10  & 25       & 47             & 5601 &  16              & 61.05            & 26727095000.48      & 5.43e-09 & opt  \\  
		LPnetlib/lp\_stocfor3  & 34       & 34             & 4447 &  11              & 7.48            & -39976.78      & 5.00e-09 & opt  \\  
		Mittelmann/rail2586  & 40       & 90             & 10308 &  20              & 576.67            & 936.58      & 5.00e-09 & opt  \\  
		Mittelmann/rail4284  & 41       & 86             & 9717 &  20              & 872.56            & 1054.62      & 5.00e-09 & opt  \\  
		Mittelmann/rail582  & 36       & 36             & 5003 &  9              & 10.03            & 209.76      & 5.00e-09 & opt  \\  
		
		\midline
		\headcol
		\multicolumn{9}{c}{Compare with Table \ref{table:SuiteSparseIterative2} ($tol=10^{-6}$, $f=500$)}  \\
	\midline

		LPnetlib/lp\_osa\_30  & 28       & 28             & 3014 &  8              & 15.79            & 2142139.87      & 2.50e-07 & opt  \\  
		LPnetlib/lp\_osa\_60  & 27       & 42             & 4894 &  9              & 62.70            & 4044072.53      & 2.50e-07 & opt  \\  
		LPnetlib/lp\_pds\_10  & 25       & 47             & 6544 &  15              & 62.05            & 26727094975.92      & 2.71e-07 & opt  \\  
		LPnetlib/lp\_stocfor3  & 41       & 41             & 5300 &  12              & 8.29            & -39976.77      & 2.50e-07 & opt  \\  
		Mittelmann/rail2586  & 40       & 94             & 11200 &  21              & 619.89            & 936.60      & 2.50e-07 & opt  \\  
		Mittelmann/rail4284  & 41       & 79             & 9752 &  19              & 898.94            & 1054.80      & 2.50e-07 & opt  \\  
		Mittelmann/rail582  & 37       & 37             & 4836 &  10              & 10.09            & 209.75      & 2.50e-07 & opt  \\  
		\midline
		\headcol
		\multicolumn{9}{c}{Compare with Table \ref{table:LargeSparse} ($tol=10^{-6}$)}  \\
		\midline
		Meszaros/tp-6  & 31	  & 34             & 4325 &   9              & 758.84            & -13194651.17      & 5.00e-09 & opt \\
		Mittelmann/watson\_1  & 35	& 84             & 10786 &   24              & 331.30            & -1693.28	 & 5.00e-09 & opt \\
		Mittelmann/watson\_2  & 36	& 80             & 9055 &   21              & 481.80            & -7543.88	& 5.00e-09 & opt \\ 
		
		\bottomlinec                    
	\end{tabular}
\end{table}

As the results presented in Table \ref{table:Schur_further} confirm, in LP problems for which the  pattern of the matrix $A^TA+\delta I$ is particularly sparse and/or such matrix is of small dimension, the approach considered in this section leads to improved computational times while performing a limited number of Cholesky factorizations. To further underpin this point we report in Figure \ref{fig:chol_spar} the sparsity of some Cholesky factors for which improved computational times are observed when compared to those presented in Section \ref{sec:gmres_ldl}.

\begin{figure}[htb!]
	\centering
	\includegraphics[width=1 \textwidth]{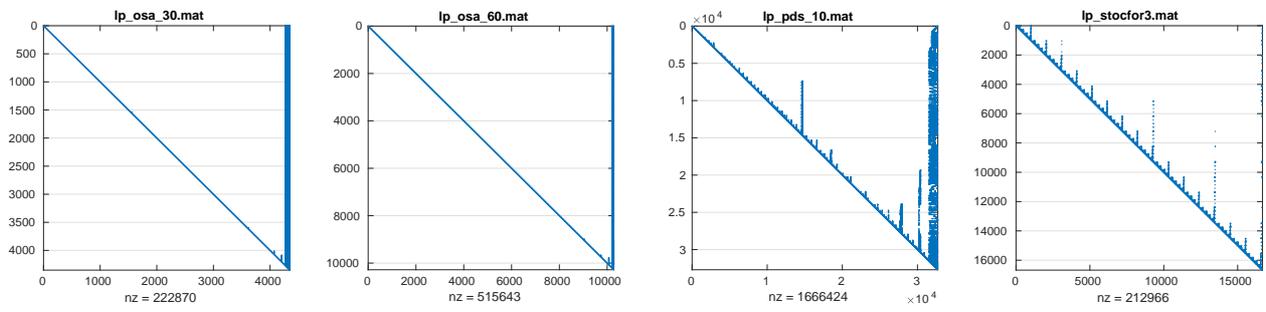}
	\caption{Sparsity  patterns of the Cholesky factors of $A^TA+\delta I$} \label{fig:chol_spar}
\end{figure}

\newpage

\section{Conclusions}

In this work we have clarified certain nuances of the convergence theory of primal-dual regularized Interior Point Methods (IPMs) using the inexact Proximal Point Method (PPM) framework: if on one hand this closes an existing literature gap, on the other, it sheds further light on their optimal implementation especially in the (nearly) rank deficient case of the linear constraints and/or in the large scale setting. 

Indeed, our convergence analysis does not require any linear independence assumption on the linear constraints nor the positive definiteness of the quadratic term. Moreover, when a direct solver can be used for the solution of the Newton system, we showed  experimentally in Section \ref{sec:num_res_p1} that a fixed but small  regularization parameter is preferred to strategies in which the regularization is driven to zero. The second part of this work has been devoted to the study of the interactions between the regularization parameters and the computational complexity of the linear algebra solvers used in IPM. In Section \ref{sec:preconditioning} we proposed a new preconditioning technique able to exploit regularization as a tool to reduce the number of preconditioner re-computations  when an iterative solver is needed for the solution of the IPMs Newton systems. Indeed, we were able to devise a class of general purposes preconditioners which require an \textit{update frequency} inversely proportional to the magnitude of the regularization parameters.  Finally, building the momentum from the correct interpretation of the primal-dual regularization parameters in connection with the overall rate of convergence of the PPM and the proposed preconditioning strategy, we were able to show the robustness and efficiency of our proposal on a class of medium and large scale LPs and QPs.

\printbibliography	
\end{document}